\newcommand{\mc}{\mathcal}
\newcommand{\mb}{\mathbb}
\newcommand{\R}{\mb R}
\newcommand{\C}{\mb C}
\newcommand{\N}{\mb N}
\newcommand{\T}{\mb T}
\newcommand{\eea}{\end{align}}
\renewcommand{\epsilon}{\varepsilon}
\renewcommand{\bar}{\overline}
\renewcommand{\tilde}{\widetilde}
\newcommand{\bo}{\boldsymbol}
\renewcommand{\phi}{\varphi}
\renewcommand\upsilon{\theta}
\newtheorem{theorem}{Theorem}[section]
\newtheorem{corollary}{Corollary}[section]
\newtheorem{lemma}{Lemma}[section]
\newtheorem{proposition}{Proposition}[section]
\theoremstyle{definition}
\newtheorem{definition}{Definition}[section]
\theoremstyle{remark}
\newtheorem{remark}{Remark}[section]
\newtheoremstyle{algorithm}
{4pt}
{4pt}
{}
{}
{}
{:}
{\newline}
{}
\newtheorem{algorithm}{Algorithm}
\newcommand{\balgorithm}{\begin{algorithm}\begin{framed}\ }
\newcommand{\ealgorithm}{\end{framed}\end{algorithm}}
\newcommand{\bd}{\begin{definition}}
\newcommand{\ed}{\end{definition}}
\newcommand{\bt}{\begin{theorem}}
\newcommand{\et}{\end{theorem}}
\newcommand{\bp}{\begin{proposition}}
\newcommand{\ep}{\end{proposition}}
\newcommand{\bc}{\begin{corollary}}
\newcommand{\ec}{\end{corollary}} 
\newcommand{\bl}{\begin{lemma}}
\newcommand{\el}{\end{lemma}}
\newcommand{\br}{\begin{remark}}
\newcommand{\er}{\end{remark}}
\DeclareMathOperator{\Lip}{Lip}
\theoremstyle{definition}
\theoremstyle{remark}
\title{Linear Response for a Family of Self-Consistent Transfer Operators}
\author{Fanni M. S\'elley\footnote{Laboratoire de Probabilit\'es, Statistique et Mod\'elisation (LPSM), CNRS,
		Sorbonne Universit\'e, Universit\'e de Paris, 4 Place Jussieu, 75005 Paris, France, \textit{email:} selley@lpsm.paris (corresponding author). The research of F. M. Sélley was supported
by the E\-u\-ro\-pe\-an Research Council (ERC) under the European Union's Horizon 2020 research and innovation programme (grant agreement No 787304). }, Matteo Tanzi\footnote{Courant Institute of Mathematical Sciences, NYU, 251 Mercer St., New York, NY, USA, \textit{email:} matteo.tanzi@sns.it }}
\date{}
\begin{document}
\maketitle

\begin{abstract}
We study a system of globally coupled uniformly expanding circle maps in the thermodynamic limit. The state of the system is described by a probability density and its evolution is given by the action of a nonlinear operator called the self-consistent transfer operator. Self-consistency is understood in the following sense: if $\varphi$ corresponds to the system's state, then the evolution of $\varphi$ is given by the application of the transfer operator of the circle map $F_{\varphi}$. This is a $\phi-$dependent map describing the dynamics of a single unit in the finite system where the interaction coming from all the nodes is replaced with the average of the interaction term with respect to the density $\phi$. Assuming some level of smoothness of the coupled circle maps and of the coupling, we prove that when the coupling strength is sufficiently small, the system has a unique smooth stable state. We then show that this stable state is continuously differentiable as a function of the coupling strength. Finally, we prove that the derivative satisfies a linear response formula which at zero simplifies to a formula reminiscent to the one obtained for the linear transfer operators of perturbed expanding circle maps. 
\end{abstract}

\section{Introduction}

A fundamental question in the theory of dynamical systems is how the statistical properties of a system change when it is subjected to perturbation. A system exhibits linear response if the invariant measure depends smoothly on the perturbation, and an expression for the derivative of the invariant measure (in the strong  or in the dual sense) is called  a linear response formula.

The rigorous study of linear response dates back to Ruelle, who proved linear response of uniformly hyperbolic Axiom A systems \cite{ruelle1997differentiation,ruelle1998general,ruelle2009review}. Similar results can be proved in some non-uniformly hyperbolic \cite{dolgopyat2004differentiability,zhang2018smooth} or non-uniformly expanding cases such as intermittent maps \cite{bahsoun2015linear,baladi2016linear,korepanov2016linear} and piecewise expanding unimodal maps \cite{baladi2008linear,baladi2010alternative}. But caution is required, as examples for the lack of linear response are also well known \cite{aspenberg2019fractional,baladi2007susceptibility,baladi2015whitney,baladi2017linear} (see \cite{baladi2014linear} for a more comprehensive collection of references). Deterministic systems subjected to random perturbations are also known to exhibit linear response in some cases \cite{galatolo2019linear,galatolo2019quadratic}. In addition to theoretical works, linear response theory is successfully applied in geophysics, in particular climate science \cite{lucarini2014mathematical,majda2012challenges}.

In this paper we rigorously prove a linear response formula for  all-to-all weakly coupled uniformly expanding maps in the thermodynamic limit. To the best of our knowledge, there are few results available on linear response of complex systems with multiple interacting units. A program focusing on the control of statistical properties of extended systems by exploiting linear response was outlined by Mackay \cite{mackay2018management}. This question was further pursued in the case of a single dynamical unit defined by a uniformly expanding circle map \cite{galatolo2017controlling} and by a smooth map of a compact smooth Riemannian manifold \cite{kloeckner2018linear}. In case of coupled map lattices, the smooth dependence of the SRB measure and the linear response formula for  was studied by Jiang and de la Llave \cite{jiang2006linear,jiang1999smooth}. Wormell and Gottwald \cite{wormell2018validity,wormell2019linear} recently showcased numerical examples  and some rigorous arguments for all-to-all coupled maps with mean-field interaction showing that it is possible for a high dimensional compound system to exhibit linear response, even if its  units do not.        

 Our setup is based on the model studied by Fernandez \cite{fernandez2014breaking}. The finite size model is described by $M$ coordinates $(x_1,...,x_M)$ with $x_i\in\T$, whose evolution is given by 
\begin{equation}\label{Eq:FiniteDyna}
x_i(t+1)=f\circ \Phi_{\epsilon}\left(x_i(t);x_1(t),...,x_M(t)\right)
\end{equation}
where $\Phi_\epsilon:\T\times \T^M\rightarrow \T$,
\begin{equation}\label{Eq:CoupExp}
\Phi_{\epsilon}\left(x_i;x_1,...,x_M\right)=x_i+ \frac{\epsilon}{M}\sum_{j=1}^Mh\left(x_i,x_j\right),
\end{equation}
is a smooth mean-field all-to-all coupling between the coordinates ($\epsilon$ is the coupling strength\footnote{For example, when $\epsilon=0$, $\Phi_0$ equals the identity and the evolution of any coordinate $i$ equals the uncoupled dynamics $f$.}), and $f:\T\rightarrow \T$ is a sufficiently smooth map describing the uncoupled dynamics.

When taking the thermodynamic limit, $M\rightarrow \infty$,  one can replace the finite dimensional state space $\T^M$ with the infinite-dimensional state space $\T^\N$, and the coupling term in \eqref{Eq:CoupExp} with
\[
\lim_{M\rightarrow \infty}\frac{\epsilon}{M}\sum_{j=1}^Mh\left(x_i,x_j\right).
\]
The limit above might not  exist. Let's assume that for the initial condition $(x_i(0))_{i=1}^\infty\in \T^\N$ there is a probability measure $\mu$  such that    
 \begin{equation}\label{Eq:AssumConvWeak}
\lim_{M\rightarrow\infty} \frac{1}{M}\sum_{i=1}^M\delta_{x_i(0)}= \mu
 \end{equation}
  where the limit is with respect to the  weak convergence of measures \footnote{Notice that the existence of the limit and value of the limit in  \eqref{Eq:AssumConvWeak} depends, in general, on the ordering of the coordinates.
  
  Given a sequence of i.i.d. random variables $\{X_i\}_{i\in\N}$ such that the law of $X_i$ is given by $\mu$, then almost every realization of the sequence $\{X_i\}_{i\in\N}$ satisfies \eqref{Eq:AssumConvWeak}. This is a consequence of the strong Law of Large Numbers and the separability of continuous functions from $S^1$ to $\R$ with respect to the topology induced by the sup norm.}.  This assumption ensures that  for any continuous $h$, 
 \[
 \lim_{M\rightarrow\infty}\frac{\epsilon}{M}\sum_{j=1}^Mh(x_i(0),x_j(0))= \epsilon\int h(x_i(0),y)d\mu(y),
 \]
 and the time evolution of each coordinate can be written as:
 \[
 x_i(1)=f\circ \Phi_{\epsilon,\mu}(x_i(0))\quad\mbox{where}\quad \Phi_{\epsilon,\mu}(x)=x+\epsilon \int h(x,y)d\mu(y).
 \]  
 Calling $f_*$ and ${(\Phi_{\epsilon,\mu}})_*$ the push-forwards of $f$ and ${\Phi_{\epsilon,\mu}}$,
 \[
 \lim_{M\rightarrow \infty}\frac{1}{M}\sum_{i=1}^M\delta_{x_i(1)}=f_*{(\Phi_{\epsilon,\mu}})_*(\mu)
 \]
 where the convergence is weak and follows from the continuity of the push forwards (which only requires the continuity of $f$ and $\Phi_{\epsilon,\mu}$). In case $\mu$ has a density $\phi$ with respect to a suitable reference measure (Lebesgue, for instance) it is sufficient to study the evolution of the density. By the above argument, this is given by a transfer operator that depends on the probability measure, thus $\phi$ itself (for a precise definition, see \eqref{Eq:TransfOp}). This nonlinear application is called a \emph{self-consistent transfer operator. This will be the object under study in this paper.} 

Fernandez \cite{fernandez2014breaking} studied the finite setting in the case of $h(x,y)=g(x-y)$ with $g: \mathbb{R} \to \mathbb{R}$ defined to be a specific discontinuous, but piecewise affine function. Studying the thermodynamic limit of his setup in case of $\varepsilon$ sufficiently small, Bálint and Sélley \cite{selley2016mean} first showed that when $f(x)=2x \mod 1$, the constant function is the only smooth fixed density. Expanding these results, in \cite{balint2018synchronization}, they considered a general uniformly expanding smooth circle map $f$, and showed that the invariant density is unique and  a Lipschitz continuous function of the coupling strength, with higher regularity being obstructed by the discontinuity of the function $h$.

In this paper we present an approach based on convex cones of functions to study self-consistent operators for general uniformly expanding $f$ and general $h$ satisfying certain smoothness conditions. We show that in the weak coupling regime ($\epsilon$ small), the self-consistent operator has a unique fixed smooth density that depends in a differentiable way on the coupling strength. Furthermore, we compute a linear response formula for the density. 

We believe that assuming stronger smoothness conditions on $h$ and $f$, higher smoothness of the invariant density in terms of the coupling could be also achieved and a quadratic response formula could be deduced, reminiscent of the formula obtained in \cite{galatolo2019quadratic}. Another, more difficult, open question is whether one could apply ideas similar to the one in this paper to treat the case where $h$ is not smooth. This would require the definition of some suitable Banach spaces and cones playing analogous roles to the spaces and cones of smooth functions that we consider here.
 
 \subsection{Setup}\label{Sec:Setup}
 \paragraph{Function spaces and norms} Let $S^1$ denote the unit circle and denote the Lebesgue (Haar) measure on $S^1$ by $\lambda$. Throughout this paper, we consider the representation of $S^1$ as the unit interval $[0,1]$ with 0 and 1 identified. We denote by $L^1(S^1,\mathbb{R})$ the real valued functions on $S^1$ for which
 \[
 \int_{S^1}|f|\:d\lambda < \infty
 \]
 holds. Let $C^0(S^1,S^1)$ denote the space of continuous mappings of $S^1$, endowed with the norm
 \[
 \|f\|_{C^0}=\max_{x\in S^1}|f(x)|.
 \]
 For integers $k \geq 1$, let $C^k(S^1,S^1)$ denote the space of $k$-times continuously differentiable mappings of $S^1$, endowed with the norm
 \[
 \|f\|_{C^k}=\max_{x\in S^1}|f(x)|+\sum_{\ell=1}^k\max_{x\in S^1}|f^{(k)}(x)|.
 \]
\paragraph{Uncoupled Dynamics} Let $f \in C^k(S^1,S^1)$ (with $k>1$ to be specified later) be a uniformly expanding circle map, i.e. there exists an $\omega > 1$ such that $|f'(x)| > \omega$ for all $x \in S^1$, and denote by $P$ its transfer operator with respect to the Lebesgue measure. That is, $P: L^1(S^1,\mathbb{R}) \to L^1(S^1,\mathbb{R})$ is defined as
\begin{equation} \label{Eq:TransferOpP}
P\varphi(x)=\sum_{y \in f^{-1}(x)}\frac{\varphi(y)}{|f'(y)|}, \qquad \varphi \in L^1(S^1,\mathbb{R}).
\end{equation}
\paragraph{Mean-Field Coupling} Given $\epsilon_0>0$, $t\in[-\epsilon_0,\epsilon_0]$, $h \in C^k(S^1 \times S^1, \mathbb{R})$ (with $k>1$ to be specified later), and $\psi \in L^1(S^1,\mathbb{R}_0^+)$\footnote{$\R^+_0$ is the set containing the positive real numbers together with 0.}  with $\int_{S^1} \psi =1$,  define $\Phi_{t,\psi}:S^1\rightarrow S^1$
\[
\Phi_{t,\psi}(x)=x+t\int_{S^1} h(x,y)\psi(y)dy.
\]
We assume that $\epsilon_0>0$ is sufficiently small so that $\Phi_{t,\psi}$ is a diffeomorphism  such that $\Phi_{t,\psi}' > 0$, and we denote by $Q_{t,\psi}$ its transfer operator. More precisely, similarly to \eqref{Eq:TransferOpP}, the operator $Q_{t,\psi}: L^1(S^1,\mathbb{R}) \to L^1(S^1,\mathbb{R})$ is defined as
\begin{equation} \label{Eq:TransferOpQ}
Q_{t,\psi}\varphi(x)=\frac{\varphi}{\Phi_{t,\psi}'}\circ \Phi_{t,\psi}^{-1}(x), \qquad \varphi \in L^1(S^1,\mathbb{R}).
\end{equation}
The evolution of the density of states $\psi$  under the mean-field coupling is then given by the nonlinear application of $Q_t$ defined as $Q_t(\psi):=Q_{t,\psi}\psi$.
In the following we use the short-hand notation $A_{\psi}(x)=\int_{S^1} h(x,y)\psi(y)dy$, so we write
$
\Phi_{t,\psi}(x)=x+tA_{\psi}(x).
$
Thus substituting $\varphi=\psi$ in \eqref{Eq:TransferOpQ} gives the following expression for $Q_t$:
\begin{equation}\label{Eq:QepsDef}
Q_{t}\psi(x)=\frac{\psi}{1+tA'_{\psi}}\circ \Phi^{-1}_{t,\psi}(x).
\end{equation}

\paragraph{\textbf{Coupled Dynamics}} Define $F_{t,\psi}:=f\circ \Phi_{t,\psi}$, and let $\mc L_{t,\psi}:=PQ_{t,\psi}$ denote its transfer operator. Given $t\in[-\epsilon_0,\epsilon_0]$, define the self-consistent transfer operator  $\mc L_t:L^1(S^1,\mathbb{R})\rightarrow L^1(S^1,\mathbb{R})$ as
\begin{equation} \label{Eq:TransfOp}
\mc L_t:=PQ_t
\end{equation}
which describes the evolution of the all-to-all coupled system in the thermodynamic limit.

\paragraph{Smooth densities} Throughout the paper we are going to work with the restrictions of $P,Q_t$ and $\mc L_t$ to some appropriate spaces of smoothly differentiable functions. These restrictions are well-defined: on the one hand, $\varphi \in C^{\ell}$ implies $P\varphi \in C^{\ell}$ for $\ell < k$ supposing $f \in C^k(S^1,S^1)$. On the other, if $h \in C^{k'}(S^1 \times S^1,\mathbb{R})$ then $\Phi_{t,\psi} \in C^{k'}(S^1,S^1)$ for sufficiently small values of $t$, thus $Q_t\varphi \in C^{\ell}$ for $\ell < k'$.

By continuity, $f$ is an $N$-fold covering map of $S^1$ for some $N \in \mathbb{N}$. We  make a technical assumption on $f$ ensuring that the expansion is large enough and the distortion is small enough, more precisely
\begin{equation} \label{Eq:Assumf}
N\left(\frac{\max|f''|}{\omega^3}+\frac{1}{\omega^2}\right) < 1.
\end{equation}
Note that for example the map $f(x)=kx \mod 1$ (where $k$ is an integer) and moderate perturbations satisfy this condition. We remark that we could exclude this condition by working with higher iterates of the self-consistent transfer operator. However, this would complicate the calculations a great deal and we will assume \eqref{Eq:Assumf} for the sake of clarity of the presentation.
 From now on, $K>0$ will denote an unspecified positive number that depends on $h$ and bounds of its derivatives only, while $\tilde K>0$ is another unspecified constant depending also on $f$, unless otherwise specified. 
\subsection{Summary of the results and outline of the paper} 
The first theorem of this paper claims that for $t$ sufficiently small, the system has a unique equilibrium density $\rho(t)$, and any sufficiently smooth initial state will evolve towards $\rho(t)$.
\begin{theorem}\label{Thm:ExistLowReg}
Assume $f\in C^2(S^1,S^1)$ and $h\in C^2(S^1\times S^1,\mathbb{R})$. Then, there is $\epsilon_0>0$ sufficiently small such that for any $t\in[-\epsilon_0,\epsilon_0]$, $\mc L_t$ has a unique fixed non-negative probability density $\rho(t)\in \Lip(S^1,\R^+)$. Furthermore, there exist constants $\tilde{K}>0$ and $\Lambda<1$ such that for any non-negative $\phi\in\Lip (S^1,\R)$with $\int \varphi =1$ it holds that \[\|\mc L_t^n\phi-\rho(t)\|_{C^0}\leq \tilde{K}\Lambda^n \qquad \text{for all $n \in \mathbb{N}$.}\] 
\end{theorem}

\begin{remark}
A very similar result was stated by \cite[Theorem 4]{keller2000ergodic}. However the coupling mechanism there is different in the sense that an integral average is computed from the current system statistic which feeds into the dynamics as a constant. Our coupling is more complicated: it varies from point to point due to the $x$-dependence of the function $h$. However, the results and the methods that can be used to prove them are essentially the same. We view this statement as a preparatory result for the next theorem, our main result.     
\end{remark}
The second theorem claims that taking $f$ and $h$ sufficiently smooth, the dependence of $\rho (t)$ on $t$ is $C^1$ in a strong sense. 
\begin{theorem}\label{Thm:LinResp}
Assume $f\in C^5(S^1,S^1)$ and $h\in C^5(S^1\times S^1,\mathbb{R})$. Then\begin{itemize}
\item[(i)] there is an $\epsilon_1 > 0$, $\varepsilon_1 \leq \varepsilon_0$ such that $\rho(t)\in C^3(S^1,\R)$ for all $t \in [-\varepsilon_1,\varepsilon_1]$;
\item[(ii)] there is an $\epsilon_2 > 0$, $\varepsilon_2 \leq \varepsilon_1$ such that $t\mapsto \rho(t)$ belongs to $C^1([-\epsilon_2,\epsilon_2],C^1(S^1,\R))$;
\item[(iii)] there is an $\epsilon_3 > 0$, $\varepsilon_3 \leq \varepsilon_2$ such that the following linear response formula holds for $|\hat{t}| < \varepsilon_3$:
\[
\partial_{t}\rho(t)|_{t=\hat{t}}=-(1-P_{\hat{t}}+\hat{t}P\Kappa_{\hat{t}})^{-1}P\Kappa_{\hat{t}}(\rho(\hat{t})),
\]
where $P_{\hat{t}}=PQ_{\hat{t},\rho(\hat{t})}$ and the operator $\Kappa_{\hat{t}}$ is given by \eqref{Eq:ThetaOp}.
\end{itemize}  

\end{theorem}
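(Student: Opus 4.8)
The plan is to build on the self-consistency identity $\rho(t)=\mc L_t\rho(t)=PQ_{t,\rho(t)}\rho(t)=\mc L_{t,\rho(t)}\rho(t)$, which exhibits $\rho(t)$ as an invariant density of the \emph{ordinary} (linear) transfer operator $\mc L_{t,\rho(t)}$ of the circle map $F_{t,\rho(t)}=f\circ\Phi_{t,\rho(t)}$. The structural fact that makes the scheme work is that $\psi$ enters the coupling only through the averaged function $A_\psi(x)=\int_{S^1}h(x,y)\psi(y)\,dy$, and $\psi\mapsto A_\psi$ is \emph{regularising}: $\|A_\psi\|_{C^k}\le K\|\psi\|_{L^1}$, so for every $\psi\in L^1$ the map $\Phi_{t,\psi}=\mathrm{id}+tA_\psi$, its inverse, and hence $F_{t,\psi}$ lie in $C^k$ and depend on $\psi$ without loss of derivatives. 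For (i) I would argue: for $\epsilon_0$ small, $F_{t,\rho(t)}$ is a $C^5$ uniformly expanding circle map (\eqref{Eq:Assumf} is an open condition, so it survives the perturbation $\Phi_{t,\rho(t)}-\mathrm{id}=O(t)$), hence has a unique $C^3$ invariant density by the classical regularity theory for transfer operators on $C^r$ spaces; by the uniqueness in Theorem~\ref{Thm:ExistLowReg} this density is $\rho(t)$, so $\rho(t)\in C^3$.

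For (ii)--(iii) I would differentiate the fixed-point equation. Work in $V=\{v\in C^1(S^1,\R):\int_{S^1}v=0\}$, the natural space since $\mc L_t$ preserves total mass, so $\partial_t\rho(t)\in V$ and $D_\psi\mc L_t$ leaves $V$ invariant. Differentiating $\mc L_t\psi=P\big(\tfrac{\psi}{1+tA'_\psi}\circ\Phi^{-1}_{t,\psi}\big)$ in $\psi$ near $\rho(\hat t)$ yields the term $PQ_{\hat t,\rho(\hat t)}v=P_{\hat t}v$ from the explicit (numerator) dependence, plus a contribution from the $\psi$-dependence of $\Phi_{t,\psi}$ which, being mediated by the regularising average $\psi\mapsto A_\psi$, does not differentiate $v$; together with the smoothness of the base point $\rho(\hat t)$ from (i) (which supplies the derivatives falling on it), this makes $D_\psi\mc L_{\hat t}|_{\rho(\hat t)}$ a bounded operator on $C^1$, and one collects the $\Phi$-contribution into $-\hat t\,P\Kappa_{\hat t}v$ with $\Kappa_{\hat t}$ the operator of \eqref{Eq:ThetaOp}; likewise $\partial_t\mc L_t\psi\big|_{(\hat t,\rho(\hat t))}=-P\Kappa_{\hat t}(\rho(\hat t))$. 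Differentiating $\rho(t)=\mc L_t\rho(t)$ at $\hat t$ then gives $\big(\Id-P_{\hat t}+\hat t\,P\Kappa_{\hat t}\big)\partial_t\rho(\hat t)=-P\Kappa_{\hat t}(\rho(\hat t))$, which is the stated formula once the operator on the left is shown to be boundedly invertible on $V$.

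That invertibility is the core analytic step. Since $P_{\hat t}=PQ_{\hat t,\rho(\hat t)}$ is the transfer operator of the $C^5$ uniformly expanding map $F_{\hat t,\rho(\hat t)}$, it satisfies a uniform Lasota--Yorke inequality in the norm pair $(\|\cdot\|_{C^0},\|\cdot\|_{C^1})$, and $F_{\hat t,\rho(\hat t)}\to f$ as $\hat t\to0$ with $\sup_{\|\phi\|_{C^1}\le1}\|(P_{\hat t}-P)\phi\|_{C^0}=O(\hat t)$; hence Keller--Liverani perturbation theory gives $P_{\hat t}$ a spectral gap on $V$ uniform in $\hat t$, so $\Id-P_{\hat t}$ is invertible on $V$ with uniformly bounded inverse. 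As $\Kappa_{\hat t}$ is bounded (indeed regularising) uniformly in $\hat t$, the term $\hat t\,P\Kappa_{\hat t}$ has small operator norm on $V$ for $|\hat t|$ small, so a Neumann series gives invertibility of $\Id-P_{\hat t}+\hat t\,P\Kappa_{\hat t}$, uniformly in $\hat t$. Feeding this into an implicit-function argument for $G(t,\psi)=\psi-\mc L_t\psi$ (or into a direct estimate of $\rho(t+s)-\rho(t)$ obtained by subtracting the two fixed-point equations and inverting the linearised operator) proves that $t\mapsto\rho(t)$ is $C^1$ from a neighbourhood of $0$ into $C^1(S^1,\R)$, which is (ii); substituting the derivatives computed above proves (iii).

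I expect the main obstacle to be the regularity bookkeeping: one must make precise that differentiating the composition operator $\psi\mapsto\,\cdot\circ\Phi^{-1}_{t,\psi}$ keeps $D_\psi\mc L_{\hat t}|_{\rho(\hat t)}$ bounded on $C^1$ — which uses both the separation of the ``direct'' (linear, bounded) occurrences of $\psi$ from the occurrences through the average $A_\psi$, and the fact that the derivative is only ever needed at the smooth density $\rho(t)$ rather than at a generic $C^1$ function — while simultaneously securing the uniform bounded invertibility of $\Id-P_{\hat t}+\hat t\,P\Kappa_{\hat t}$ on $V$, for which Keller--Liverani is needed because $t\mapsto P_t$ and $t\mapsto\Phi_{t,\rho(t)}$ are continuous only in the strong, not the norm, topology. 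The gap between $\rho(t)\in C^3$ and $C^1$-differentiability of $t\mapsto\rho(t)$ only into $C^1$ is the usual signature of this loss, not a defect of the approach.
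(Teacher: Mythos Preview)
Your argument for (i) is correct and cleaner than the paper's: once $\rho(t)$ exists as the unique Lipschitz fixed point (Theorem~\ref{Thm:ExistLowReg}), it is the invariant density of the \emph{linear} transfer operator of the $C^5$ expanding map $F_{t,\rho(t)}$ (here the regularising property of $\psi\mapsto A_\psi$ is exactly what makes $F_{t,\rho(t)}\in C^5$ although a priori $\rho(t)$ is only Lipschitz), so classical regularity theory gives $\rho(t)\in C^4$. The paper instead proves a separate invariant-set result for the nonlinear $\mc L_t$ on $\mc C_{\bo C}$ (Proposition~\ref{Prop:InvSetDens}) and invokes Schauder. Your route to the formula in (iii) via differentiating $\rho(t)=\mc L_t\rho(t)$ is also sound and more direct than the paper's spectral-projection/residue computation (Proposition~\ref{Prop:LinearResp}); both arguments need the same invertibility of $\Id-P_{\hat t}+\hat t\,P\Kappa_{\hat t}$ on zero-mean functions, and your Neumann-series argument for that is correct.

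There is, however, a real gap in your plan for (ii). The implicit function theorem on $C^1$ requires $\psi\mapsto\mc L_t\psi$ to be Fr\'echet $C^1$ near $\rho(\hat t)$, but for $\hat t\neq 0$ this is \emph{not} established by your argument and in fact fails: although the formal linearisation $D_\psi\mc L_{\hat t}|_{\rho(\hat t)}=P_{\hat t}-\hat t\,P\Kappa_{\hat t}$ is a bounded operator on $C^1$ (as you correctly observe), the remainder contains the cross term $v'\circ\Phi^{-1}_{\hat t,\rho+v}-v'\circ\Phi^{-1}_{\hat t,\rho}$, whose $C^0$ norm is controlled only by the modulus of continuity of $v'$ at scale $O(\hat t\,\|v\|_{L^1})$, and this is not uniformly $o(\|v\|_{C^1})$ over the $C^1$ unit ball. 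The regularising map $\psi\mapsto A_\psi$ does not help here, because the loss comes from the explicit occurrence of $\psi$ in the composition $\psi\circ\Phi^{-1}_{\hat t,\psi}$, not from the $\Phi$-dependence; working in $C^k$ for higher $k$ only shifts the problem up. Your fallback (``direct estimate of $\rho(t+s)-\rho(t)$'') \emph{can} be made to work, precisely by feeding in the uniform $C^3$ bound on $\rho(t)$ from (i) to control the remainder along the specific increments $v=\rho(t+s)-\rho(t)$, but this bootstrap has to be carried out and is more than bookkeeping.

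The paper avoids the issue altogether by a different mechanism: it lifts to an operator $\mc L$ on curves $\gamma:t\mapsto\gamma(t)\in\mc C_{\bo C}$, proves Lasota--Yorke-type inequalities for $\|\partial_t(\mc L\gamma)(t)\|_{C^3}$ and $\|\partial_t^2(\mc L\gamma)(t)\|_{C^2}$ (Proposition~\ref{Prop:ChatInv}), so that $\mc L$ preserves a set $\hat{\mc C}_{\epsilon_2,K_1,K_2}$ of uniformly $C^2$-in-$t$ curves, and then obtains $t\mapsto\rho(t)$ as a $C^1$ limit by relative compactness of $\hat{\mc C}_{\epsilon_2,K_1,K_2}$ in $C^1([-\epsilon_2,\epsilon_2],C^1)$ together with the pointwise convergence from Proposition~\ref{Lem:ConeCont}. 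No Fr\'echet differentiability of the nonlinear operator is ever invoked; the price is that the argument is less conceptual and the estimates are done by hand.
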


\begin{remark}
	Both theorems depend intimately on the fact that when the coupling strength is sufficiently weak, the operators satisfy uniform Lasota-Yorke type inequalities implying uniform spectral properties. However, standard perturbation theory arguments are insufficient to obtain the results above due to the nonlinear character of the operators $\mathcal L_t$. Getting around the nonlinearity of the operators is where the main novelty of our approach lies.    
\end{remark}
\noindent We break down the proof of the above theorems into several propositions listed below and proved in the following sections.

In Section \ref{Sec:InvDens} we study existence, uniqueness and regularity of the fixed point for the self-consistent transfer operator $\mc L_t$ \eqref{Eq:TransfOp}. To this end, we consider the cone of log-Lipschitz functions
\[
\mc V_a:=\left\{\phi:S^1\rightarrow \R^+:\;\frac{\phi(x)}{\phi(y)}\le \exp(a|x-y|)\right\},
\]
where $|x-y|$ denotes the Euclidean distance between $x$ and $y$ on the unit circle, and show that for large enough $a$, $\mc L_t$ keeps $\mc V_a$ invariant. Most importantly, we can also show that $\mc L_t$  is a contraction with respect to the Hilbert metric $\theta_a$ (see \eqref{Eq:HilbertMetric} below) on $\mc V_a$. 

  \begin{remark}
Cones of functions and the Hilbert metric are common tools to prove uniqueness of the invariant density for some linear transfer operators \cite{liverani1995decay}. The standard approach uses the fact that a linear operator mapping a cone with finite diameter strictly inside itself is a contraction in the Hilbert metric. Unlike the standard case,  $\mc L_t$ is a nonlinear operator, and one has to prove explicitly that $\mc L_t$ is a contraction. 
\end{remark}
\begin{proposition}\label{Lem:ConeCont}
There is $a_0>0$ sufficiently large such that for $a>a_0$, there is  $\epsilon_0>0$ sufficiently small such that for $t\in[-\epsilon_0,\epsilon_0]$, $\mc L_t$ keeps $\mc V_a$ invariant. Furthermore, there are $a>0$, and $\epsilon_0>0$ small such that $\mc L_t$ is a contraction on $(\mc V_a,\theta_{a})$ for any $t\in[-\epsilon_0,\epsilon_0]$.
 \end{proposition}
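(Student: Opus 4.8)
The plan is to split the statement into its two parts: first invariance of $\mc V_a$ under $\mc L_t$, then contraction in the Hilbert metric $\theta_a$. For the invariance, I would work with the decomposition $\mc L_t = PQ_t$ and track how the log-Lipschitz constant of a density transforms under each factor. Applying $Q_{t,\psi}$ to $\phi\in\mc V_a$ amounts, by \eqref{Eq:QepsDef}, to dividing by $1+tA'_\psi$ and composing with $\Phi_{t,\psi}^{-1}$; since $\Phi_{t,\psi}$ is $\epsilon_0$-close to the identity in $C^2$, the map $\Phi_{t,\psi}^{-1}$ distorts Euclidean distances by a factor $1+O(\epsilon_0)$, and the factor $1/(1+tA'_\psi)$ has log-Lipschitz constant $O(\epsilon_0 K)$; so $Q_t$ sends $\mc V_a$ into $\mc V_{(1+C\epsilon_0)a + C\epsilon_0 K}$ for a constant $C$ depending only on $f,h$. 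Then I would invoke the classical Lasota--Yorke/distortion estimate for the transfer operator $P$ of the uniformly expanding map $f$: writing $P\phi(x)=\sum_{f(y)=x}\phi(y)/|f'(y)|$, each inverse branch contracts distances by $\omega^{-1}$ and the weights $1/|f'|$ contribute a log-Lipschitz term controlled by $\max|f''|/\omega^2$, so $P$ maps $\mc V_b$ into $\mc V_{b/\omega + \max|f''|/\omega^2}$ (this is where \eqref{Eq:Assumf} enters to keep the constant below the fixed point). Composing, $\mc L_t$ maps $\mc V_a$ into $\mc V_{a'}$ with $a' = \big((1+C\epsilon_0)a + C\epsilon_0 K\big)/\omega + \max|f''|/\omega^2$; choosing $a$ large enough that $(a/\omega + \max|f''|/\omega^2) < a$ with room to spare, and then $\epsilon_0$ small enough to absorb the $C\epsilon_0$ corrections, gives $a' \le a$, i.e. invariance. (Preservation of positivity and of the normalization $\int\phi=1$ is immediate since $P$ and $Q_{t,\psi}$ are transfer operators.)

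For the contraction, the subtlety flagged in the remark is that $\mc L_t$ is \emph{nonlinear} in $\psi$, so one cannot simply quote ``a linear map sending a cone strictly inside itself contracts the Hilbert metric.'' I would instead first establish the auxiliary linear fact: for each fixed $\psi\in\mc V_a$, the linear operator $\mc L_{t,\psi}=PQ_{t,\psi}$ maps $\mc V_a$ into $\mc V_{a''}$ with $a'' < a$ strictly (with a gap uniform in $\psi$ and $t$, from the computation above), hence by the Birkhoff--Liverani argument $\mc L_{t,\psi}$ contracts $\theta_a$ by a factor $\Lambda<1$ depending only on the ratio of $\mc V_{a''}$ to $\mc V_a$, uniformly in $\psi$ and $t$. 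Then for $\phi_1,\phi_2\in\mc V_a$ I would write
\[
\theta_a(\mc L_t\phi_1,\mc L_t\phi_2) \le \theta_a(\mc L_{t,\phi_1}\phi_1,\mc L_{t,\phi_1}\phi_2) + \theta_a(\mc L_{t,\phi_1}\phi_2,\mc L_{t,\phi_2}\phi_2),
\]
using the triangle inequality for $\theta_a$. The first term is $\le \Lambda\,\theta_a(\phi_1,\phi_2)$ by the uniform linear contraction. For the second term I would estimate the difference $\mc L_{t,\phi_1}\phi_2 - \mc L_{t,\phi_2}\phi_2$ directly: it comes only from the dependence of $\Phi_{t,\psi}$ and of the Jacobian factor on $\psi$, and since $A_\psi(x)=\int h(x,y)\psi(y)\,dy$ is linear in $\psi$ with $\|A_{\phi_1}-A_{\phi_2}\|_{C^1}\le K\|\phi_1-\phi_2\|_{L^1} \le K\|\phi_2\|_\infty\,\theta_a(\phi_1,\phi_2)$ (bounding the $L^1$ difference by the Hilbert-metric distance on the normalized cone, which is a standard comparison), the whole expression is $O(t K)$ times something controlled by $\theta_a(\phi_1,\phi_2)$. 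Hence the second term is bounded by $C\epsilon_0 K\,\theta_a(\phi_1,\phi_2)$, and choosing $\epsilon_0$ small enough that $\Lambda + C\epsilon_0 K < 1$ completes the proof.

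The main obstacle is the second term, i.e. controlling the nonlinear perturbation $\mc L_{t,\phi_1}\phi_2 - \mc L_{t,\phi_2}\phi_2$ in a way that is quantitatively comparable to $\theta_a(\phi_1,\phi_2)$ rather than to an $L^1$ or $C^0$ norm — the two metrics are only comparable on bounded pieces of the cone, so I would need to be careful to work inside the fixed normalized slice $\{\phi\in\mc V_a:\int\phi=1\}$, where $\theta_a$-balls have uniformly bounded diameter in $C^0$ and conversely $\theta_a$ is controlled by $\|\cdot\|_\infty$-type quantities (this uses that on $\mc V_a$ the sup and inf of a normalized density are pinched between constants). Getting the constants in these two-sided comparisons explicit and uniform in $t\in[-\epsilon_0,\epsilon_0]$, and checking that the gap $1-\Lambda$ from the linear step genuinely dominates the $O(\epsilon_0 K)$ nonlinear error, is the crux; everything else is the routine distortion bookkeeping already sketched for the invariance part.
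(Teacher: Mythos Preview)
Your proposal is correct and follows the same overall strategy as the paper: track the log-Lipschitz constant through $Q_t$ and $P$ for invariance, then handle the nonlinearity via the triangle inequality, bounding the linear piece by a uniform Birkhoff contraction and the nonlinear remainder by $O(|t|)\,\theta_a(\phi_1,\phi_2)$. The paper's organization differs only in two implementation choices. First, it performs the triangle-inequality split at the $Q_t$ step rather than at $\mc L_t$, introducing an intermediate cone $\mc V_{a_2}$ with $a<a_1<a_2$ so that the linear map $Q_{t,\psi}:\mc V_a\to\mc V_{a_1}\subset\mc V_{a_2}$ is already a contraction in $\theta_{a_2}$, and then applies the contraction of $P:(\mc V_{a_2},\theta_{a_2})\to(\mc V_a,\theta_a)$ separately; this is equivalent to your use of the composite $\mc L_{t,\phi_1}$ as the linear contraction but makes the strict-inclusion bookkeeping cleaner. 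Second, for the nonlinear term the paper estimates $\alpha_{a_2}(Q_{t,\phi}\psi,Q_{t,\psi}\psi)$ directly from the formula \eqref{Eq:HilbertMetric} (both the ratio term and the difference-quotient term) rather than detouring through a $C^0$/Hilbert-metric comparison as you propose. The comparison you flag as the crux --- between $\max|1-\phi/\psi|$ and $\log\alpha_a(\phi,\psi)$ on the normalized slice --- is exactly what the paper isolates in its preliminary Lemmas, so your diagnosis of where the work lies is accurate.
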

 \noindent 
 The proof of Theorem \ref{Thm:ExistLowReg} then follows by standard arguments showing that under repeated application of $\mc L_t$, all the elements in the cone $\mc V_a$ converge exponentially fast to the fixed point $\rho(t)$ (in the $C^0$ norm). 

To show that the $C^5$-smoothness of $f$ and $h$ implies that $\rho(t) \in C^3(S^1,\mathbb{R})$, define the following set for the quadruple $\bo C=(C_1,C_2,C_3,C_4)\in (\R^+)^4$:
\[
\mc C_{\bo C}:=\left\{\phi\in C^4(S^1,\R^+):\;\int \phi=1,\; \left\|\frac{d^i\phi}{dx^i}\right\|_{\C^0}<C_i\,\;i=1,...,4\right\}.
\]
We show that
\begin{proposition}\label{Prop:InvSetDens}
There is $\epsilon_1>0$ and  $C_1,C_2,C_3,C_4>0$ such that for $t\in[-\epsilon_1,\epsilon_1]$
\[
\mc L_t \mc C_{\bo C}\subset \mc C_{\bo C}.
\]
\end{proposition}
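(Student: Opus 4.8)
The plan is to prove, for each order $i\in\{1,2,3,4\}$, a Lasota--Yorke type inequality for the nonlinear operator $\mc L_t$,
\[
\Bigl\|\tfrac{d^i}{dx^i}\mc L_t\phi\Bigr\|\;\le\;\lambda_i(t)\Bigl\|\tfrac{d^i\phi}{dx^i}\Bigr\|\;+\;D_i\Bigl(t;\|\phi\|,\dots,\bigl\|\tfrac{d^{i-1}\phi}{dx^{i-1}}\bigr\|\Bigr),
\]
valid for \emph{every} probability density $\phi\in C^4(S^1,\R^+)$, with $\lambda_i(t)<1$ for $|t|$ small and $D_i$ a fixed polynomial-type expression in the listed lower-order norms that does \emph{not} involve $\|d^i\phi/dx^i\|$. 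Granting these, the constants are chosen by induction on $i$: assuming $C_1,\dots,C_{i-1}$ already fixed, and using that any $\phi\in\mc C_{\bo C}$ satisfies $\|\phi\|\le 1+\tfrac12C_1$ (since $\int\phi=1$), pick $C_i$ slightly larger than $D_i(0;1+\tfrac12C_1,C_1,\dots,C_{i-1})\big/(1-\lambda_i(0))$, so that $\lambda_i(0)\,C_i+D_i(0;\dots)<C_i$ with room to spare; then take $\epsilon_1>0$ small enough that for $|t|<\epsilon_1$ the corrections $\lambda_i(t)-\lambda_i(0)$ and $D_i(t;\cdot)-D_i(0;\cdot)$ fit inside that room, simultaneously for $i=1,\dots,4$. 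Because $P$ and $Q_{t,\phi}$ are positive operators preserving $\int$, the constraints $\phi>0$ and $\int\phi=1$ are preserved, and we obtain $\mc L_t\mc C_{\bo C}\subset\mc C_{\bo C}$.

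These inequalities will be extracted from the identity $\mc L_t\phi=\mc L_{t,\phi}\phi$, where $\mc L_{t,\phi}=PQ_{t,\phi}$ is the \emph{linear} transfer operator of the circle map $F_{t,\phi}=f\circ\Phi_{t,\phi}$. The point that tames the nonlinearity here is that the self-consistent feedback acts only through $A_\phi(x)=\int h(x,y)\phi(y)\,dy$, whose $x$-derivatives obey $|\partial_x^mA_\phi(x)|\le\|\partial_x^mh\|\int\phi\le K$ for \emph{any} probability density $\phi$; hence $\Phi_{t,\phi}=\Id+tA_\phi$ is a $C^5$-diffeomorphism $O(t)$-close to the identity, uniformly in $\phi$, and $F_{t,\phi}$ is $O(t)$-close to $f$ in $C^5$, uniformly in $\phi$. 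Since $f,h\in C^5$, the function $1/|F_{t,\phi}'|$ and the inverse branches of $F_{t,\phi}$ are of class $C^4$ --- precisely enough to differentiate $\mc L_{t,\phi}\phi$ four times --- and every structural quantity attached to $F_{t,\phi}$, namely the expansion $\min|F_{t,\phi}'|$, the distortion $\max|F_{t,\phi}''|$, and the $C^4$-bounds on the inverse branches and on $1/|F_{t,\phi}'|$, is uniform in $\phi$ and converges as $t\to0$ to the corresponding quantity of $f$.

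Differentiating $\mc L_{t,\phi}\phi(x)=\sum_{F_{t,\phi}(y)=x}\phi(y)\big/|F_{t,\phi}'(y)|$ to order $i$ by the chain and Leibniz rules, the terms containing the top derivative $d^i\phi/dx^i$ add up to something bounded in modulus by $N(\min|F_{t,\phi}'|)^{-(i+1)}\,\|d^i\phi/dx^i\|$, so for $i\ge2$ one may take $\lambda_i(t)=N(\min|F_{t,\phi}'|)^{-(i+1)}$, which is $<1$ for $|t|$ small because $N\omega^{-2}<1$ by \eqref{Eq:Assumf}. The case $i=1$ needs one extra turn: there is also a genuine $\|\phi\|$-term with coefficient $\le N\max|F_{t,\phi}''|(\min|F_{t,\phi}'|)^{-3}$, and since $\|\phi\|\le1+\tfrac12\|\phi'\|$ one re-absorbs part of it into the $\|\phi'\|$-coefficient to define $\lambda_1(t)$, which then tends as $t\to0$ to $\tfrac{N\max|f''|}{2\omega^3}+\tfrac{N}{\omega^2}<1$, again by \eqref{Eq:Assumf}, while the leftover constant $\tfrac{N\max|f''|}{\omega^3}+O(t)$ does not involve $C_1$, making the base case of the induction legitimate. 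All remaining terms involve only $d^m\phi/dx^m$ with $m<i$ times the uniformly bounded structural data, hence are absorbed into $D_i$ of the stated form; this yields the four inequalities and closes the scheme of the first paragraph.

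I expect the main obstacle to be organisational rather than conceptual: carrying out the fourth-order differentiation of the composition $P\circ Q_{t,\phi}$ carefully enough to confirm the two properties the induction rests on, namely (a) the coefficient of the highest derivative is genuinely $<1$ --- for $i=1$ this is exactly where the distortion hypothesis \eqref{Eq:Assumf} is consumed, whereas for $i=2,3,4$ the expansion alone suffices --- and (b) every other contribution depends on $\phi$ only through $\|d^m\phi/dx^m\|$ with $m<i$, plus $t$-small corrections uniform over probability densities. The nonlinearity of $\mc L_t$, the real difficulty elsewhere in the paper, here merely forces one to keep all constants uniform in $\phi$, which costs nothing thanks to the a priori bound on $A_\phi$ recorded above.
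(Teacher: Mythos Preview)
Your proposal is correct and follows essentially the same route as the paper: derive Lasota--Yorke inequalities for each derivative order by differentiating the transfer-operator formula for $F_{t,\phi}=f\circ\Phi_{t,\phi}$, exploit that $A_\phi$ and its $x$-derivatives are bounded uniformly over probability densities so all structural constants are $\phi$-independent, use \eqref{Eq:Assumf} to get contraction at order~1 (and expansion alone at higher orders), and then fix $C_1,\dots,C_4$ inductively. The only cosmetic difference is that you use the slightly sharper bound $\|\phi\|\le 1+\tfrac12\|\phi'\|$ on the unit circle where the paper uses $\|\phi\|\le 1+\|\phi'\|$; this changes nothing of substance.
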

\noindent  The existence of a fixed point in $C^3$ is then implied by the fact that $\mc L_t$ is continuous (Lemma \ref{Prop:ContSelfconOperator} below),  $\mc C_{\bo C}$ is relatively compact in $C^3$, and the Schauder fixed-point theorem.  Since one can find $\phi\in\mc C_{\bo C} \cap \mc V_a$ (for example take $\phi=1$), the fixed point $\rho \in\mc V_a$, whose existence was argued above, must belong to the closure of $\mc C_{\bo C}$ and therefore is in $C^3$, that will prove point (i) of Theorem \ref{Thm:LinResp}.

In Section \ref{Sec:SmoothDep} we show that, when $f$ and $h$ are $C^5$, the curve of fixed points $t\mapsto \rho(t)$ found above is $C^1$  from $[-\epsilon_2,\epsilon_2]$ to $C^1(S^1,\R)$ for some $\epsilon_2>0$. 
\begin{remark} In the case of a one-parameter family $ \{f_t\}_t$ of sufficiently smooth uniformly expanding maps, the strong differentiability in $t$ follows from the spectral properties of the associated transfer operators and the smoothness of $t\mapsto f_t$ \cite{baladi2014linear}. Here, since the one-parameter family depends on the fixed density itself, i.e. $\rho(t)$ is the fixed density of $f_t=F_{t,\rho(t)}$ (self-consistent relation), the usual arguments do not apply since there is no a priori knowledge of the regularity of $t\mapsto f_t$ that has to be established first. 
\end{remark}
This is achieved by showing that the set of curves 
\[
\hat {\mc C}_{\epsilon_2,K_1,K_2}:=\left\{ \gamma:[-\epsilon_2, \epsilon_2]\rightarrow \mc C_{\bo C}\mbox{ s.t. }\;\substack{(t,x)\mapsto \gamma(t)(x) \mbox{ is } C^2\\ \sup_t\left\|\frac{d}{dt}\gamma(t)\right\|_{C^3}<K_1, \;\sup_t\left\|\frac{d^2}{dt^2}\gamma(t)\right\|_{C^2}<K_2}\right\}
\]
is invariant for suitably chosen $K_1$ and $K_2$ greater than zero under the mapping  \[(\mc L\gamma)(t)=\mc L_t(\gamma (t))\mbox{ for all }t\in [-\epsilon_2,\epsilon_2]\] 
naturally defined by the family of operators on a curve.
\begin{proposition}\label{Prop:ChatInv}
Choose $C_1,C_2,C_3,C_4>0$ so that Proposition \ref{Prop:InvSetDens} holds. Then there is $\epsilon_2>0$ sufficiently small and $K_1,K_2>0$ such that \[\mc L \hat {\mc C}_{\epsilon_2,K_1,K_2}\subset  \hat {\mc C}_{\epsilon_2,K_1,K_2}.\]
\end{proposition}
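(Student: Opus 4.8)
The plan is to show that the three conditions defining $\hat{\mc C}_{\epsilon_2,K_1,K_2}$ — namely (a) $\gamma(t)\in\mc C_{\bo C}$ for all $t$, (b) $(t,x)\mapsto(\mc L\gamma)(t)(x)$ is $C^2$ with $\sup_t\|\frac{d}{dt}(\mc L\gamma)(t)\|_{C^3}<K_1$, and (c) $\sup_t\|\frac{d^2}{dt^2}(\mc L\gamma)(t)\|_{C^2}<K_2$ — are each preserved by the map $\gamma\mapsto\mc L\gamma$, provided $\epsilon_2$ is small enough. Condition (a) is immediate from Proposition \ref{Prop:InvSetDens}: since $\gamma(t)\in\mc C_{\bo C}$ and $|t|\le\epsilon_2\le\epsilon_1$, we get $(\mc L\gamma)(t)=\mc L_t(\gamma(t))\in\mc C_{\bo C}$, so the $C^0$-through-$C^4$ controls on each slice are inherited for free. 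The work is entirely in the $t$-derivatives.

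First I would write $(\mc L\gamma)(t)=PQ_{t,\gamma(t)}\gamma(t)$ explicitly, using \eqref{Eq:QepsDef}, as a composition built from: the density $\gamma(t)$, the averaged coupling $A_{\gamma(t)}(x)=\int h(x,y)\gamma(t)(y)\,dy$, the map $\Phi_{t,\gamma(t)}(x)=x+tA_{\gamma(t)}(x)$ and its inverse, the Jacobian factor $1/(1+tA'_{\gamma(t)})$, and finally the (linear, $t$-independent, $C^4$-smoothing) operator $P$. The key observation is that $t\mapsto A_{\gamma(t)}$ and $t\mapsto A'_{\gamma(t)}$ are $C^2$ in $t$ with values in $C^3$ (resp. $C^2$), with norms controlled by $h\in C^5$ and by the a priori bounds $C_1,\dots,C_4$ on $\gamma(t)$ together with $\sup_t\|\frac{d}{dt}\gamma(t)\|_{C^3}<K_1$ and $\sup_t\|\frac{d^2}{dt^2}\gamma(t)\|_{C^2}<K_2$ — crucially these bounds do not involve $K_1,K_2$ in a way that blows up, they are of the form $K\cdot(1+K_1)$, $K\cdot(1+K_1^2+K_2)$ etc., times factors that are $O(1)$ in $\epsilon_2$. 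Then I would differentiate $\Phi_{t,\gamma(t)}$, its inverse (via $\partial_t[\Phi^{-1}] = -(\partial_t\Phi)\circ\Phi^{-1}/(\Phi'\circ\Phi^{-1})$ and the analogous second-derivative identity), and the Jacobian factor, assembling $\partial_t(Q_{t,\gamma(t)}\gamma(t))$ and $\partial_t^2(Q_{t,\gamma(t)}\gamma(t))$ by the chain and product rules. Applying $P$ and using that $P:C^j\to C^j$ is bounded (indeed $P$ gains nothing but loses nothing in regularity), one obtains estimates of the schematic form
\[
\sup_t\Bigl\|\tfrac{d}{dt}(\mc L\gamma)(t)\Bigr\|_{C^3}\le K\bigl(1+\epsilon_2(1+K_1)\bigr),\qquad \sup_t\Bigl\|\tfrac{d^2}{dt^2}(\mc L\gamma)(t)\Bigr\|_{C^2}\le K\bigl(1+\epsilon_2(1+K_1+K_1^2+K_2)\bigr),
\]
where $K$ depends only on $h,f$ and on $C_1,\dots,C_4$.

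The closing argument is then the standard "choose the constants in the right order" trick: first fix $K_1:=2K$ so that the leading $t$-independent contribution to the first-derivative bound (the term coming from differentiating the explicit $t$ in $\Phi_{t,\gamma(t)}$ and in $1/(1+tA'_{\gamma(t)})$, which survives even at $\gamma(t)\equiv\rho(0)$) is at most $K_1/2$; likewise fix $K_2:=2K'$ for the second-derivative bound with $K'$ the corresponding constant (now allowed to depend on $K_1$, which is already fixed). Then choose $\epsilon_2>0$ small enough that $\epsilon_2(1+K_1)\le 1$ and $\epsilon_2(1+K_1+K_1^2+K_2)\le 1$, which makes the remaining $\epsilon_2$-terms contribute at most another $K_1/2$ and $K_2/2$ respectively. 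This yields $\sup_t\|\frac{d}{dt}(\mc L\gamma)(t)\|_{C^3}<K_1$ and $\sup_t\|\frac{d^2}{dt^2}(\mc L\gamma)(t)\|_{C^2}<K_2$, i.e. conditions (b)–(c). The $C^2$ joint regularity of $(t,x)\mapsto(\mc L\gamma)(t)(x)$ follows along the way from the explicit formulas, since every building block is jointly $C^2$ (here one uses $f,h\in C^5$ to absorb the derivatives lost in forming $A'$, in inverting $\Phi$, and in applying $P$).

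The main obstacle I anticipate is \emph{bookkeeping the derivative losses}: differentiating $\Phi^{-1}_{t,\gamma(t)}$ twice in $t$ and then composing with $\gamma(t)/(1+tA'_{\gamma(t)})$ produces terms involving $A''_{\gamma(t)}$ and third $x$-derivatives of $\gamma(t)$ evaluated at $\Phi^{-1}_{t,\gamma(t)}$, so one needs $\gamma(t)\in C^4$ (hence the fourth-derivative bound $C_4$ in the definition of $\mc C_{\bo C}$) and $h\in C^5$ just to make sense of $\partial_t^2$ landing in $C^2$ — and one has to verify that the operator $P$, applied last, restores enough regularity that the output again lies in $C^4$ on each slice with the $\mc C_{\bo C}$ bounds (this is exactly Proposition \ref{Prop:InvSetDens}) while the $t$-derivatives land in $C^3$ and $C^2$. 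Keeping track of which bound ($C_i$, $K_1$, or $K_2$) each term is controlled by, and checking that no term requires a bound strictly stronger than what is being assumed (so that the invariance is not circular), is the delicate point; once the dependency graph of the estimates is laid out, the "order of choosing constants" argument closes it.
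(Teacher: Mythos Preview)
Your schematic estimate
\[
\sup_t\Bigl\|\tfrac{d}{dt}(\mc L\gamma)(t)\Bigr\|_{C^3}\le K\bigl(1+\epsilon_2(1+K_1)\bigr)
\]
is not correct, and this is a genuine gap rather than a bookkeeping issue. When you differentiate $(\mc L\gamma)(t)=PQ_{t,\gamma(t)}\gamma(t)$ in $t$, one of the terms comes from differentiating the copy of $\gamma(t)$ that is being pushed forward (not the one sitting inside $A_{\gamma(t)}$). By linearity of $Q_{t,\psi}$ in its argument, this term is exactly
\[
PQ_{t,\gamma(t)}\,\tfrac{d}{dt}\gamma(t),
\]
and at $t=0$ it reduces to $P\,\tfrac{d}{dt}\gamma(0)$. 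This contribution is \emph{not} multiplied by $t$ or $\epsilon_2$; it is an $O(1)$ term whose $C^3$ norm is of order $K_1$. Your argument then hinges on ``$P:C^j\to C^j$ is bounded'', but boundedness alone gives $\|P\,\tfrac{d}{dt}\gamma(0)\|_{C^3}\le C_P K_1$ with no reason for $C_P<1$, so the invariance cannot be closed by shrinking $\epsilon_2$.

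What the paper does instead is to isolate precisely this term (their \eqref{Eq:Term0}) and apply to it a Lasota--Yorke inequality of the same type proved in Proposition~\ref{Prop:InvSetDens}, obtaining
\[
\Bigl\|PQ_{t,\gamma(t)}\tfrac{d}{dt}\gamma(t)\Bigr\|_{C^3}\le \sigma\Bigl\|\tfrac{d}{dt}\gamma(t)\Bigr\|_{C^3}+\tilde K,
\]
with $\sigma<1$ coming from the expansion of $f$ and assumption \eqref{Eq:Assumf} (and using that $\int\tfrac{d}{dt}\gamma(t)=0$ to control the zeroth-order part). All remaining terms are bounded uniformly in $K_1$ (they depend only on the $\mc C_{\bo C}$ bounds), yielding $\|\tfrac{d}{dt}(\mc L\gamma)(t)\|_{C^3}\le \sigma K_1+K$, and one then takes $K_1\ge K/(1-\sigma)$. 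The same mechanism handles the second $t$-derivative: the term $PQ_{t,\gamma(t)}\tfrac{d^2}{dt^2}\gamma(t)$ again appears with an $O(1)$ coefficient and must be contracted via Lasota--Yorke, not absorbed by smallness of $\epsilon_2$. In short, the closing mechanism is the spectral contraction of $PQ_{t,\gamma(t)}$ on the top derivative, not the smallness of the parameter interval.
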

\noindent   Since $\hat {\mc C}_{\epsilon_2,K_1,K_2}$ is relatively compact in $C^1([-\epsilon_2,\epsilon_2],C^1(S^1,\R))$, for any $\gamma\in\hat{\mc C}$, there is an increasing sequence $\{n_k\}_{k\in\N}$ and $\bar \gamma\in C^1([-\epsilon_2,\epsilon_2],C^2(S^1,\R))$ such that  $\mc L^{n_k}\gamma\rightarrow \bar \gamma$, and since by  Proposition \eqref{Prop:ChatInv}  $\mc L^{n_k}_t(\gamma(t))\rightarrow \rho(t)$ for every $t$, $\bar\gamma(t)= \rho(t)$. This proves point (ii) of Theorem \ref{Thm:LinResp}.

In Section \ref{Sec:LinRespFormula}, we prove point (iii) of Theorem \ref{Thm:LinResp} by calculating a linear response formula. To ease the notation, we denote $P_t:=\mc L_{t,\rho(t)}$.
\begin{proposition}\label{Prop:LinearResp}
	We have the formula
	\begin{equation}\label{Eq:Lineart0}
\partial_t\rho|_{t=0}=-(1-P)^{-1}P(\rho(0) A_{\rho(0)})',
\end{equation}
where $\partial_t$ is understood as $\partial_t: C^1([-\varepsilon_3,\varepsilon_3],C^1(S^1,\mathbb{R})) \mapsto C^0([-\varepsilon_3,\varepsilon_3],C^1(S^1,\mathbb{R}))$.
More generally, for any $\hat t\in (-\epsilon_3,\epsilon_3)$ where $\varepsilon_3 \leq \varepsilon_2$ as in Proposition \ref{Prop:ChatInv},
	\[
	\partial_{t}\rho|_{t=\hat{t}}=-(1-P_{\hat{t}}+\hat{t}P\Kappa_{\hat{t}})^{-1}P(\Kappa_{\hat{t}}(\rho(\hat{t})))
	\]
	where $P_{\hat{t}}=PQ_{\hat{t}\rho(\hat{t})}$ and
	\begin{equation} \label{Eq:ThetaOp}
	\Kappa_{t}(g)=\left(\left(\frac{\rho(t)}{\Phi_{t,\rho(t)}'} A_g\right) \circ \Phi_{t,\rho(t)}^{-1} \right)', \quad g  \in C^1(S^1,\mathbb{R}).
\end{equation}
\end{proposition}
\noindent
The formula in \eqref{Eq:Lineart0} is reminiscent of the one obtained in \cite[Theorem 2.2]{baladi2014linear} for perturbations of expanding circle maps. This is not surprising, since for $t$ approaching 0 the operator is very close to linear. 

At the end of the paper, Appendix \ref{App:Der} gathers some elementary formulae used  throughout our calculations. 


\section{Invariant Densities for the Self-Consistent Transfer Operators} \label{Sec:InvDens}
We now show that the fixed point found above is unique using an invariant cone argument and the Hilbert metric. We remind the reader of our defintion 
 \[
 \mc V_a:=\left\{\phi:S^1\rightarrow \R^+:\;\frac{\phi(x)}{\phi(y)}\le \exp(a|x-y|)\right\}.
 \]
and of the Hilbert metric
 \[
 \theta_a(\phi,\psi)=\log\frac{1}{\alpha_a(\phi,\psi)\alpha_a(\psi,\phi)}
 \]
 where
 \begin{equation}\label{Eq:HilbertMetric}
 \alpha_a(\psi,\phi)=\inf\left\{\frac{\phi(x)}{\psi(x)},\frac{e^{a|x-y|}\phi(x)-\phi(y)}{e^{a|x-y|}\psi(x)-\psi(y)}:\;x\neq y\right\}.
 \end{equation}
It is a known important fact that $P$ is a contraction with respect to this metric (see e.g. \cite{liverani1995decay,Viana}). In the following we show that for suitable values of $t$ and $a>0$, the operator $\mc L_t$ is also a contraction.
The lemmas below gather a few useful inequalities.
\begin{lemma}\label{Lem:IneqSifInv}
There is $\epsilon_0>0$ sufficiently small such that for every $a>0$, every $t\in(-\epsilon_0,\epsilon_0)$, and $\phi,\psi\in C^0$ with $\int\phi=\int\psi=1$,
 \begin{align}
  \left| \Phi_{t,\psi}^{-1}(x)-\Phi_{t,\phi}^{-1}(x) \right|&\leq K|t|\|\psi-\phi\|_{C^0}\label{Ineq:EstInvDif}
  \end{align}
\end{lemma}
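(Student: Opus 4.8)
The plan is to estimate the difference of the two inverse diffeomorphisms by first controlling the difference of the forward maps $\Phi_{t,\psi}$ and $\Phi_{t,\phi}$, and then transferring this to the inverses using a uniform lower bound on the derivatives $\Phi_{t,\psi}'$. First I would write, for a fixed $x \in S^1$ and with $u := \Phi_{t,\psi}^{-1}(x)$, $v := \Phi_{t,\phi}^{-1}(x)$,
\[
0 = \Phi_{t,\psi}(u) - \Phi_{t,\phi}(v) = \bigl(\Phi_{t,\psi}(u) - \Phi_{t,\phi}(u)\bigr) + \bigl(\Phi_{t,\phi}(u) - \Phi_{t,\phi}(v)\bigr).
\]
The first bracket equals $t\bigl(A_\psi(u) - A_\phi(u)\bigr) = t\int h(u,y)(\psi(y)-\phi(y))\,dy$, whose absolute value is at most $|t| \,\|h\|_{C^0}\,\|\psi-\phi\|_{L^1} \le K|t|\,\|\psi-\phi\|$ (here I would use $\|\psi-\phi\|_{L^1}\le\|\psi-\phi\|$ on the unit-length circle, or simply bound the $L^1$ norm directly). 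For the second bracket, I would apply the mean value theorem to $\Phi_{t,\phi}$ on the arc between $u$ and $v$: it equals $\Phi_{t,\phi}'(\xi)(u-v)$ for some $\xi$, and for $\epsilon_0$ small enough $\Phi_{t,\phi}' = 1 + tA_\phi' \ge 1 - \epsilon_0 K \ge \tfrac12$ uniformly in $t,\phi,\xi$. Combining, $\tfrac12|u-v| \le K|t|\,\|\psi-\phi\|$, which already gives \eqref{Ineq:EstInvDif} up to the constant.

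To produce the stated factor $e^{a/2}$ I would be slightly more careful: rather than just bounding $\|h\|_{C^0}$, note that $A_\psi - A_\phi = A_{\psi-\phi}$ and the circle has length one, so $\|\psi - \phi\|_{L^1} \le \|\psi-\phi\|$ without any cone input — so actually the bare estimate above suffices and $K$ absorbs everything. If the authors want the $e^{a/2}$ to appear, it presumably comes from a sharper use of the cone property, e.g. bounding $\|\psi - \phi\|_{L^1}$ or some weighted variant in terms of $\|\psi-\phi\|$ together with the fact that elements of $\mc V_a$ with unit integral are bounded between $e^{-a/2}$ and $e^{a/2}$; I would simply carry the harmless extra factor $e^{a/2}$ through (it only makes the right-hand side larger) so that the inequality as printed holds verbatim, absorbing the genuinely $h$-dependent constants into $K$.

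The only genuine obstacle is making the lower bound $\Phi_{t,\phi}' \ge \tfrac12$ uniform: this requires an a priori bound on $\|A_\phi'\|$ that is independent of $\phi \in \mc V_a$ with unit integral. Since $A_\phi'(x) = \int \partial_x h(x,y)\phi(y)\,dy$ and $\phi \in \mc V_a$ with $\int\phi = 1$ forces $\|\phi\|_{C^0} \le e^{a/2}$ (as $\phi(x) \le e^{a|x-y|}\phi(y)$ integrated in $y$ gives $\phi(x) \le e^{a/2}$... more precisely $\phi(x)\le e^{a/2}\int\phi$), we get $\|A_\phi'\| \le \|\partial_x h\|_{C^0} e^{a/2} \le K e^{a/2}$, hence $\Phi_{t,\phi}' \ge 1 - |t| K e^{a/2} \ge \tfrac12$ once $\epsilon_0 \le (2Ke^{a/2})^{-1}$. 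Note this makes $\epsilon_0$ depend on $a$, which is consistent with the phrasing of the lemma. With that in hand the rest is the elementary manipulation sketched above, and I would write it as a single short displayed chain of inequalities.
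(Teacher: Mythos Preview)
Your approach is essentially the same as the paper's: bound $|\Phi_{t,\psi}-\Phi_{t,\phi}|$ pointwise and transfer to the inverses via the mean value theorem and a uniform lower bound on $\Phi_{t,\phi}'$. The decomposition you write with $u,v$ is exactly the identity the paper uses (they phrase it as $\Phi_{t,\psi}(\Phi_{t,\psi}^{-1}(x))=\Phi_{t,\phi}(\Phi_{t,\phi}^{-1}(x))$ and apply the mean value theorem to the same effect).

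There is one slip. You claim $\epsilon_0$ ends up depending on $a$ and assert this is ``consistent with the phrasing of the lemma''---but the lemma is stated with the quantifier order $\exists\,\epsilon_0\ \forall a$, so $\epsilon_0$ must be chosen \emph{before} $a$. The fix is immediate and you almost wrote it yourself: since $\phi\ge 0$ with $\int\phi=1$, one has
\[
|A_\phi'(x)|=\Bigl|\int \partial_x h(x,y)\,\phi(y)\,dy\Bigr|\le \|\partial_x h\|_{C^0}\int\phi = \|\partial_x h\|_{C^0},
\]
with no $e^{a/2}$ factor. Hence $\Phi_{t,\phi}'\ge 1-|t|\,\|\partial_x h\|_{C^0}$ uniformly in $a$, and any $\epsilon_0< \|\partial_x h\|_{C^0}^{-1}$ works for all $a$. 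With this correction your argument goes through verbatim; the $e^{a/2}$ in the stated bound is, as you suspected, harmless slack.
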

\begin{proof}
 Notice that
  \begin{align*}
   \left| \Phi_{t,\psi}(x)-\Phi_{t,\phi}(x) \right|&=|t|\left| \int\partial_1h(x,y)(\psi(y)-\phi(y)) \right|\\
   &\le |t| K\|\psi-\phi\|_{C^0}
     \end{align*}
      By the mean value theorem, for any $x$

 \begin{align*}
\min|\Phi_{t,\psi}'|&\leq \frac{|\Phi_{t,\psi}(\Phi_{t,\psi}^{-1}(x))-\Phi_{t,\psi}(\Phi_{t,\phi}^{-1}(x))|}{|\Phi_{t,\phi}^{-1}(x)-\Phi_{t,\psi}^{-1}(x)|}\\&=\frac{|\Phi_{t,\phi}(\Phi_{t,\phi}^{-1}(x))-\Phi_{t,\psi}(\Phi_{t,\phi}^{-1}(x))|}{|\Phi_{t,\phi}^{-1}(x)-\Phi_{t,\psi}^{-1}(x)|}\\
&\leq\frac{K|t|\|\psi-\phi\|_{C^0}}{|\Phi_{t,\phi}^{-1}(x)-\Phi_{t,\psi}^{-1}(x)|}. 
 \end{align*}
 Since by assumption $\min |\Phi_{t,\phi}'|\ge 1-K|t|>0$ for any $\phi$, \eqref{Ineq:EstInvDif} follows.
\end{proof}

 \begin{lemma}There is $\epsilon_0>0$ sufficiently small such that for every $a>0$ there is $K_a>0$ such that for every $t\in(-\epsilon_0,\epsilon_0)$ and $\phi,\psi\in\mc V_a$ with $\int\phi=\int\psi=1$, the following holds:
 \begin{align}
  \left| \Phi_{t,\psi}^{-1}(x)-\Phi_{t,\phi}^{-1}(x) \right|&\leq K_a|t|\max\left|1-\frac{\phi}{\psi}\right|\label{Eq:Est1'}\\
   \left| \Phi_{t,\phi}^{-1}(x)-\Phi_{t,\phi}^{-1}(y) \right|&\leq (1+K|t|)|x-y| \label{Eq:Est2}\\
    \max\left|1-\frac{\phi}{\psi}\right|&\ge K_a\log\alpha(\phi,\psi)\label{Eq:Est3max}
 \end{align}

 \end{lemma}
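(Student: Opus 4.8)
The plan is to treat the three inequalities in turn. Inequality \eqref{Eq:Est1'} is an immediate strengthening of the proof of Lemma \ref{Lem:IneqSifInv}: the last line of that proof already produced the bound $e^{a/2}K|t|\max|1-\phi/\psi|$ before it was coarsened to $\|\psi-\phi\|$ via the cone condition, so I simply stop one step earlier. For \eqref{Eq:Est2}, note that $\Phi_{t,\phi}^{-1}$ is the inverse of the diffeomorphism $\Phi_{t,\phi}(x)=x+tA_\phi(x)$, whose derivative satisfies $\Phi_{t,\phi}'(x)=1+tA_\phi'(x)$ with $|A_\phi'(x)|=|\int\partial_1 h(x,y)\phi(y)\,dy|\le K$. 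Hence $|\Phi_{t,\phi}'(x)|\ge 1-K|t|$, so $(\Phi_{t,\phi}^{-1})'$ is bounded in absolute value by $(1-K|t|)^{-1}\le 1+K'|t|$ for $\epsilon_0$ small (absorbing the constant into $K$), and the mean value theorem applied to $\Phi_{t,\phi}^{-1}$ gives \eqref{Eq:Est2}.

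The substantive point is \eqref{Eq:Est3max}. Recall $\alpha(\phi,\psi)=\alpha_a(\phi,\psi)=\inf\{\phi(x)/\psi(x),\,(e^{a|x-y|}\phi(x)-\phi(y))/(e^{a|x-y|}\psi(x)-\psi(y)):x\ne y\}$, and both $\phi,\psi\in\mc V_a$ have total integral $1$. First I record that $\alpha(\phi,\psi)\le 1$: if we had $\alpha(\phi,\psi)>1$ then in particular $\phi(x)/\psi(x)>1$ for all $x$, contradicting $\int\phi=\int\psi$; hence $\log\alpha(\phi,\psi)\le 0$. Now the first term in the infimum already gives $\alpha(\phi,\psi)\le \min_x \phi(x)/\psi(x)$, so there is a point $x_0$ with $\phi(x_0)/\psi(x_0)\le \min_x\phi(x)/\psi(x)+\text{(arbitrarily small)}$, and since $\int\phi=\int\psi$ that minimum is $\le 1$; thus $1-\phi(x_0)/\psi(x_0)\ge 1-\alpha(\phi,\psi)\ge 0$. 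Therefore $\max|1-\phi/\psi|\ge 1-\alpha(\phi,\psi)$. It remains to compare $1-\alpha$ with $-\log\alpha=\log(1/\alpha)$ for $\alpha\in(0,1]$: since $a<a_0$ is fixed and $\phi,\psi$ range over $\mc V_{a}$ with $\int\phi=\int\psi=1$, the ratios $\phi/\psi$ are pinched — each is $\le e^{a_0\pi}$ and $\ge e^{-a_0\pi}$ on $S^1$ — which forces $\alpha(\phi,\psi)$ to stay bounded below by a constant $\alpha_{\min}=\alpha_{\min}(a_0)>0$ (the two quantities in the infimum are uniformly bounded away from $0$ on the compact set $\{(x,y):x\ne y\}$, using that the denominators $e^{a|x-y|}\psi(x)-\psi(y)$ are positive and bounded below by the cone condition). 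On the interval $[\alpha_{\min},1]$ the elementary inequality $1-\alpha\ge \frac{\alpha_{\min}}{-\log\alpha_{\min}}\cdot(1-\alpha) \ge c_a\,(-\log\alpha)$ holds, where $c_a:=\inf_{\alpha\in[\alpha_{\min},1)}\frac{1-\alpha}{-\log\alpha}>0$ by continuity and the fact that $\frac{1-\alpha}{-\log\alpha}\to 1$ as $\alpha\to 1^-$. Combining, $\max|1-\phi/\psi|\ge 1-\alpha(\phi,\psi)\ge c_a\log\alpha(\phi,\psi)^{-1}=-c_a\log\alpha(\phi,\psi)$; since $\log\alpha\le 0$ this is exactly \eqref{Eq:Est3max} with $K_a=-c_a$... so more cleanly I write it as $\max|1-\phi/\psi|\ge K_a\,(-\log\alpha(\phi,\psi))$ with $K_a=c_a>0$, which is the stated inequality once one reads the right-hand side as $K_a$ times $\log\alpha$ being a negative quantity — I will state it in the form $\max|1-\phi/\psi|\ge K_a|\log\alpha(\phi,\psi)|$ to avoid sign ambiguity.

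The main obstacle is the uniform lower bound $\alpha(\phi,\psi)\ge\alpha_{\min}(a_0)>0$: without it, $\alpha$ could approach $0$, where $1-\alpha$ and $-\log\alpha$ are no longer comparable (the latter blows up). This is where the hypotheses $\int\phi=\int\psi=1$ and $\phi,\psi\in\mc V_a$ with $a<a_0$ are essential — they bound $\phi$ and $\psi$ away from $0$ and $\infty$ by constants depending only on $a_0$ (a density in $\mc V_a$ with unit integral satisfies $e^{-a\pi}\le\phi(x)\le e^{a\pi}$ since its max and min differ by at most a factor $e^{a\cdot\mathrm{diam}(S^1)}$ and it integrates to $1$), and then both expressions inside the infimum defining $\alpha$ are ratios of quantities that are uniformly positive and uniformly bounded, yielding $\alpha_{\min}$. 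I will carry out this pinching estimate carefully; everything else is elementary calculus on a compact interval.
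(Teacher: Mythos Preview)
Your treatments of \eqref{Eq:Est1'} and \eqref{Eq:Est2} are correct and essentially identical to the paper's.

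For \eqref{Eq:Est3max} there is a genuine gap. Your plan rests on a uniform lower bound $\alpha_a(\phi,\psi)\ge\alpha_{\min}(a_0)>0$, and this bound does not hold: the cone $\mc V_a$ has \emph{infinite} diameter in its own Hilbert metric $\theta_a$ (finite diameter is only obtained after mapping strictly inside a larger cone $\mc V_{a'}$ with $a'>a$). Concretely, take $\psi\equiv 1$ and $\phi(x)=c\,e^{a|x|}$ on $S^1=[-\tfrac12,\tfrac12]/\!\sim$; this $\phi$ sits on the boundary of $\mc V_a$, and for $0<x<y$ one computes $e^{a|x-y|}\phi(x)-\phi(y)=c\bigl(e^{a(y-x)+ax}-e^{ay}\bigr)=0$ while $e^{a|x-y|}\psi(x)-\psi(y)=e^{a(y-x)}-1>0$, so the second expression in the infimum defining $\alpha_a$ vanishes and $\alpha_a=0$. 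Your assertion that the two quantities in the infimum are ``ratios of quantities that are uniformly positive'' fails exactly here: both numerator and denominator in the second term tend to $0$ as $x\to y$, and for boundary elements of the cone the numerator can vanish identically along suitable pairs $(x,y)$. In particular, with $\alpha=0$ your reformulated inequality $\max|1-\phi/\psi|\ge K_a|\log\alpha|$ is outright false.

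The paper avoids this by never bounding $\alpha$ from below. It uses only the first term of the infimum, $\alpha(\phi,\psi)\le \phi(x_0)/\psi(x_0)$ at the point $x_0$ realizing $\max|1-\phi/\psi|$, and performs the log--versus--linear comparison at $r=\phi(x_0)/\psi(x_0)$. This $r$ \emph{is} confined to a compact subinterval of $(0,\infty)$, because $\phi,\psi\in\mc V_a$ with unit integral forces the pointwise ratio $\phi/\psi$ into $[e^{-a},e^{a}]$ (the pinching you yourself derived), not because $\alpha$ is bounded. Redirect your argument to compare at $r=\phi(x_0)/\psi(x_0)$ rather than at $r=\alpha$; the compactness you need then comes for free from the pointwise bound on $\phi/\psi$, and no control on the second term in the infimum is required.
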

 \begin{proof}
The proof of \eqref{Eq:Est1'} follows along the same lines of the proof of Lemma \ref{Lem:IneqSifInv} after noticing that 
\begin{align*}
\left| \int\partial_1h(x,y)(\psi(y)-\phi(y)) \right|&\leq \max{\psi}\cdot K\cdot\max\left|1-\frac{\phi}{\psi}\right|\\
&\leq e^{\frac a2}K\cdot\max\left|1-\frac{\phi}{\psi}\right|.
\end{align*}
 Since  $\psi\in\mc V_a$ and has integral equal to one, $\max\psi\le e^{\frac a2}$. To prove \eqref{Eq:Est2}, notice that for $t$ sufficiently small $|(\Phi^{-1}_{t,\phi})'|<1+Kt$. To prove \eqref{Eq:Est3max}, pick $x_0\in\T$ such that  $\max|1-\frac{\phi}{\psi}|=1-\frac{\phi}{\psi}(x_0)$. Then the inequality is implied by
 \begin{align*}
 \log\alpha(\phi,\psi)\leq \log\left(\frac{\phi}{\psi}(x_0)\right)= \log\left[1-\left(1-\frac{\phi}{\psi}(x_0)\right)\right]&\leq K_a^{-1}\left(1-\frac{\phi}{\psi}(x_0)\right)
 \end{align*}
as $\phi/\psi$ is lower bounded by $e^{-\frac{a}{2}}$.  Analogous conclusion holds if $\max_{x\in\T}|1-\frac{\phi}{\psi}(x)|=\frac{\phi}{\psi}(x_0)-1$ for some $x_0\in\T$.
 \end{proof}
 
 We are now ready to prove Proposition \ref{Lem:ConeCont}.
 \begin{proof}[Proof of Proposition \ref{Lem:ConeCont}]
\textbf{Step 1: There is $\lambda<1$ such that $\mc L_t\mc V_a\subset \mc V_{\lambda a}$}

First of all, we show that there is $K$ such that $Q_t \mc V_a\subset \mc V_{a+K(a+1)|t|}$ for $t$ sufficiently small and $a>0$. In fact, we show more. Pick $\epsilon_0>0$ such that $1- |t|\max| \partial_1 h|>0$ for $t\in[-\epsilon_0,\epsilon_0]$. For any $\phi,\psi\in\mc V_a$
 we have
 \[
 \frac{\phi\circ \Phi_{t,\psi}^{-1}(x)}{\phi\circ \Phi_{t,\psi}^{-1}(y)} \leq e^{a(1+K|t|)}|x-y| \quad \text{and} \quad  \frac{(1+t\partial_xA_\psi)\circ \Phi_{t,\psi}^{-1}(y)}{(1+t\partial_xA_\psi)\circ \Phi_{t,\psi}^{-1}(x)} \leq e^{K|t||x-y|}, 
 \]
 where the bound on the first expression follows from $\phi\in\mc V_a$ and \eqref{Eq:Est2}, while that on the second expression follows from the smoothness of $h$ (in particular $K$ depends on the norms of the derivatives of $h$). Thus
 \[
 \frac{Q_{t,\psi}\phi(x)}{Q_{t,\psi}\phi(y)}=\frac{\phi\circ \Phi_{t,\psi}^{-1}(x)}{\phi\circ \Phi_{t,\psi}^{-1}(y)}\frac{(1+t\partial_xA_\psi)\circ \Phi_{t,\psi}^{-1}(y)}{(1+t\partial_xA_\psi)\circ \Phi_{t,\psi}^{-1}(x)}\\
 \le e^{(a+(a+1)K|t|)|x-y|}.
 \]
 Analogously, it is easy to check (by noticing that $\left| \frac{f'(x)}{f'(y)} \right | \leq e^{\frac{1}{\omega}\max|f''||x-y|}$) that if $\phi\in\mc V_{a}$ then $P\phi\in \mc V_{a/\omega+\max|f''|/\omega^2}$.
 

Picking 
\[
a_0:=\frac{\max|f''|}{\omega(\omega-1)}
\]
one has that for any $a>a_0$,
\[
a/\omega+\max|f''|/\omega^2<a.
\]
 Therefore, for any $a>a_0$ there is $\epsilon_0>0$ and  $a_1,a_2$ satisfying 
 \[
 a<a+(a+1)K|\epsilon_0|=:a_1<a_2<\omega\left(a-\frac{\max |f''|}{\omega^2}\right).
 %
 %
\]

   With this choice of $a_1$ and $a_2$ we have: $Q_t \mc V_a\subset \mc V_{a_1}\subset \mc V_{a_2}$ for $t\in (-\epsilon_0,\epsilon_0)$, and $P\mc V_{a_2}\subset \mc V_{\left(\frac{a_2}{\omega }+\frac{\max|f''|}{\omega^2 }\right) }\subset \mc V_a$. More precisely,  $\mc L_t\mc V_a\subset \mc V_{\lambda a}$ with $\lambda<1$ depending on $a$. \footnote{ If $\mc L_t$ were a linear operator, this would be enough to conclude that $\mc L_t$ is a contraction with respect to the Hilbert metric. Since it is nonlinear, we need to show it directly by estimating the Hilbert distance.}.
 
 From now on, $a>a_0$ is considered to be fixed. 
 \\
{\bf Step 2: for $t$ small, $\mc L_t:\mc V_a\rightarrow \mc V_a$ is a contraction with respect to $\theta_a$}

Let us start by showing how assuming
 \begin{equation}\label{Eq:BndProjQ}\theta_{a_2}(Q_t\phi,Q_t\psi)\leq \left[1+\mc O(|t|)\right]\theta_{a}(\phi,\psi)
 \end{equation} yields the proof of the claim in Step 2.
Recall that with the choice of $a$ and $a_2$ as in Step 1, $P:(\mc V_{a_2},\theta_{a_2})\rightarrow (\mc V_a,\theta_a)$ is a contraction as implied by the  linearity of $P$ and the fact that $\mc V_a\subset \mc V_{a_2}$. Call $\sigma<1$ the contraction rate, i.e. $\theta_a(P\phi,P\psi)\le \sigma\theta_{a_2}(\phi,\psi)$ for any $\phi,\psi\in \mc V_{a_2}$. This and \eqref{Eq:BndProjQ} imply that $\theta_{a}(\mc L_t\phi,\mc L_t\psi)\leq \sigma\left[1+\mc O(t)\right]\theta_{a}(\phi,\psi)$. Picking $|t|$ sufficiently small, equation \eqref{Eq:BndProjQ} implies that $\mc L_t$ is also a contraction.

Now we proceed with the proof of \eqref{Eq:BndProjQ}. Notice that  by  the triangle inequality
\[
\theta_{a_2}(Q_t\phi,Q_t \psi)\leq \theta_{a_2}(Q_{t,\phi}\phi,Q_{t,\phi}\psi)+\theta_{a_2}(Q_{t,\phi}\psi,Q_{t,\psi}\psi).
\]

For the first term,  the operator $Q_{t,\phi}$ is linear and maps the cone $\mc V_a$ to $\mc V_{a_1}$. $\mc V_{a_1}$ is strictly inside the cone $\mc V_{a_2}$. Since $Q_{t,\phi}(\mc V_a)$ is strictly contained in $\mc V_{a_2}$ and its diameter in $(\mc V_{a_2},\theta_{a_2})$ is finite \cite[Proposition 2.5]{Viana}, the mapping $Q_{t,\psi}:(\mc V_a,\theta_a)\rightarrow (\mc V_{a_2},\theta_{a_2})$ is a contraction \cite{Viana}. 
Therefore, 
\[
\theta_{a_2}(Q_{t,\phi}\phi,Q_{t,\phi}\psi)\le  \theta_a(\phi,\psi).
\]

For the second term, computations are lengthier and occupy the rest of the proof. Recall that 
\[\theta_{a_2}(Q_{t,\phi}\psi,Q_{t,\psi}\psi)=\log\frac{1}{\alpha_{a_2}(\phi,\psi)\alpha_{a_2}(\psi,\phi)}\] 
where $\alpha$ is the infimum of two expressions as defined in \eqref{Eq:HilbertMetric}. For the first expression in the definition of $\alpha_{a_2}$:
 \begin{align*}
 \frac{Q_{t,\phi} \psi(x)}{Q_{t,\psi} \psi(x)}
 &=\frac{\psi\circ \Phi_{t,\phi}^{-1}(x)}{\psi\circ \Phi_{t,\psi}^{-1}(x)}\cdot\frac{(1+t\partial_xA_\psi)\circ \Phi_{t,\psi}^{-1}(x)}{(1+t\partial_xA_\phi)\circ \Phi_{t,\phi}^{-1}(x)}=\\
 &=\frac{\psi\circ \Phi_{t,\phi}^{-1}(x)}{\psi\circ \Phi_{t,\psi}^{-1}(x)} \cdot\frac{(1+t\partial_xA_\psi)\circ \Phi_{t,\psi}^{-1}(x)}{(1+t\partial_xA_\psi)\circ \Phi_{t,\phi}^{-1}(x)}\cdot\frac{(1+t\partial_xA_\psi)\circ \Phi_{t,\phi}^{-1}(x)}{(1+t\partial_xA_\phi)\circ \Phi_{t,\phi}^{-1}(x)}
 \end{align*}
The bound on the first two factors follows from $\psi,\phi\in\mc V_a$, $(1+tA_{\psi})\in\mc V_{K|t|}$  (again $K$ depends on bounds on $h$ and its derivatives), and from inequality \eqref{Eq:Est1'} that give
\[
\frac{\psi\circ \Phi_{t,\phi}^{-1}(x)}{\psi\circ \Phi_{t,\psi}^{-1}(x)} \cdot\frac{(1+t\partial_xA_\psi)\circ \Phi_{t,\psi}^{-1}(x)}{(1+t\partial_xA_\psi)\circ \Phi_{t,\phi}^{-1}(x)}\ge  e^{-(a+K|t|)|\Phi_{t,\phi}^{-1}(x)-\Phi_{t,\psi}^{-1}(x)|}\ge e^{-aK_a|t|\max\left|1-\frac{\phi}{\psi}\right|}.
\]
For the third factor
\begin{align}
\frac{(1+t\partial_xA_\psi)(x)}{(1+t\partial_xA_\phi)(x)}&= \frac{1+t\int \partial_1h(x,y)\psi(y)dy}{1+t\int \partial_1h(x,y)\phi(y)dy}\nonumber\\ 
&\ge \frac{1+t\int \partial_1h(x,y)\phi(y)dy+t\left(\min\{\frac{\psi}{\phi}\}-1\right)\int |\partial_1h(x,y)|\phi(y)dy}{{1+t\int \partial_1h(x,y)\phi(y)dy}}\nonumber\\
&\ge 1-K|t|\max\left|\frac{\psi}{\phi}-1\right|\label{Eq:Est3}\\
&\ge \alpha_a(\phi,\psi)^{K|t|}.\nonumber
\end{align}
Putting all the estimates together and using \eqref{Eq:Est3max} one obtains
\[
 \frac{Q_{t,\phi} \psi(x)}{Q_{t,\psi} \psi(x)}\ge \alpha_{a}(\phi,\psi)^{K_a|t|}.
\]
where $K_a>0$ indicates a constant depending on $h$ and on $a$ only.

For the second expression in the definition of $\alpha$ we need to estimate
 \begin{align*}
 \frac{e^{a_2|x-y|}Q_{t,\phi}\psi(x)-Q_{t,\phi}\psi(y)}{e^{a_2|x-y|}Q_{t,\psi}\psi(x)-Q_{t,\psi}\psi(y)}=\frac{e^{a_2|x-y|}\frac{\psi}{(1+tA_\phi)}\circ \Phi_{t,\phi}^{-1}(x)-\frac{\psi}{(1+tA_\phi)}\circ \Phi_{t,\phi}^{-1}(y)}{e^{a_2|x-y|}\frac{\psi}{(1+tA_\psi)}\circ \Phi_{t,\psi}^{-1}(x)-\frac{\psi}{(1+tA_\psi)}\circ \Phi_{t,\psi}^{-1}(y)}= A\cdot B
 \end{align*}
 where
 \begin{align*}
 A&:=\frac{e^{a_2|x-y|}\frac{\psi}{(1+tA_\phi)}\circ \Phi_{t,\phi}^{-1}(x)-\frac{\psi}{(1+tA_\phi)}\circ \Phi_{t,\phi}^{-1}(y)}{e^{a_2|x-y|}\frac{\psi}{(1+tA_\phi)}\circ \Phi_{t,\psi}^{-1}(x)-\frac{\psi}{(1+tA_\phi)}\circ \Phi_{t,\psi}^{-1}(y)}\\
 B&:=\frac{e^{a_2|x-y|}\frac{\psi}{(1+tA_\phi)}\circ \Phi_{t,\psi}^{-1}(x)-\frac{\psi}{(1+tA_\phi)}\circ \Phi_{t,\psi}^{-1}(y)}{e^{a_2|x-y|}\frac{\psi}{(1+tA_\psi)}\circ \Phi_{t,\psi}^{-1}(x)-\frac{\psi}{(1+tA_\psi)}\circ \Phi_{t,\psi}^{-1}(y)}
 \end{align*}
 
 \begin{align}
 A&\ge \frac{e^{a_2|x-y|}\frac{\psi}{(1+tA_\phi)}\circ \Phi_{t,\psi}^{-1}(x)e^{-a_1|\Phi_{t,\psi}^{-1}(x)-\Phi_{t,\phi}^{-1}(x)|}-\frac{\psi}{(1+tA_\phi)}\circ \Phi_{t,\psi}^{-1}(y)e^{a_1|\Phi_{t,\psi}^{-1}(y)-\Phi_{t,\phi}^{-1}(y)|}}{e^{a_2|x-y|}\frac{\psi}{(1+tA_\phi)}\circ \Phi_{t,\psi}^{-1}(x)-\frac{\psi}{(1+tA_\phi)}\circ \Phi_{t,\psi}^{-1}(y)}\nonumber\\
 &\ge \frac{e^{a_2|x-y|}\frac{\psi}{(1+tA_\phi)}\circ \Phi_{t,\psi}^{-1}(x)e^{-K_a'|t| \max\left|1-\frac{\phi}{\psi}\right|}-\frac{\psi}{(1+tA_\phi)}\circ \Phi_{t,\psi}^{-1}(y)e^{K_a'|t| \max\left|1-\frac{\phi}{\psi}\right|}}{e^{a_2|x-y|}\frac{\psi}{(1+tA_\phi)}\circ \Phi_{t,\psi}^{-1}(x)-\frac{\psi}{(1+tA_\phi)}\circ \Phi_{t,\psi}^{-1}(y)}\label{Ineq:A1}\\
 &\ge e^{K_a'|t| \max\left|1-\frac{\phi}{\psi}\right|}\left(1+\frac{e^{-2K'|t| \max\left|1-\frac{\phi}{\psi}\right|}-1}{{e^{a_2|x-y|}\frac{\psi}{(1+tA_\phi)}\circ \Phi_{t,\psi}^{-1}(x)-\frac{\psi}{(1+tA_\phi)}\circ \Phi_{t,\psi}^{-1}(y)}}\right)\label{Ineq:A2}\\
 &\ge e^{K_a''|t| \max\left|1-\frac{\phi}{\psi}\right|}\ge \alpha_{a}(\phi,\psi)^{K_a|t|}\label{Ineq:A3}
 \end{align}
  where \eqref{Ineq:A1} follows from  \eqref{Eq:Est1'},  \eqref{Ineq:A3} follows from the fact that the denominator in \eqref{Ineq:A2} is bounded since $\frac{\phi}{(1+tA_\phi)}\circ \Phi_{t,\psi}^{-1}$ belongs to $\mc V_{a_1}$ with $a_1<a_2$, and $K_a$, $K'_a$ and $K''_a$ are constants depending on $a$. The factor $B$ can be bounded in a similar way: 
 \begin{align*}
 B
 \ge \alpha_{a}(\phi,\psi)^{K_a|t|}.
 \end{align*}
Putting together all the previous estimates one obtains 
\begin{align*}
\alpha_{a_2}(Q_{t,\phi}\psi,Q_{t,\psi} \psi)\ge \alpha_{a}(\phi,\psi)^{2K_a|t|},
\end{align*}  
and analogously
\begin{align*}
\alpha_{a_2}(Q_{t,\psi} \psi,Q_{t,\phi}\psi)\ge \alpha_{a}(\psi,\phi)^{2K_a|t|}
\end{align*}  
which implies
\[
\theta_{a_2}(Q_{t,\phi}\psi,Q_{t,\psi} \psi)=\log\frac{1}{\alpha_{a_2}(Q_{t,\phi}\psi,Q_{t,\psi} \psi)\alpha_{a_2}(Q_{t,\psi} \psi,Q_{t,\phi}\psi)}\leq 2K_a|t|\theta_{a}(\phi,\psi).
\]
 \end{proof}
 We are now ready to conclude the proof of Theorem \ref{Thm:ExistLowReg}
 \begin{proof}[Proof of Theorem \ref{Thm:ExistLowReg}]
 The proof can be finished by standard arguments. The existence of an invariant density $\rho(t)$ in $\Lip(S^1,\mathbb{R}^+)$ follows for example by the argument presented in \cite[Section 2.3]{Viana} and the exponential convergence of $\mc L_t^n\phi$ to $\rho(t)$ in the $C^0$ norm, hence uniqueness of $\rho(t)$ can be argued as in \cite[Section 2.4]{Viana}.
 \end{proof}
From now on let's assume that $f \in C^5(S^1,S^1)$ and $h\in C^5(S^1\times S^1,\mathbb{R})$, and prove point (i) of Theorem \ref{Thm:LinResp}, i.e. we show that in this case $\rho(t)\in C^3(S^1,\R^+)$. Recall that $\|\cdot\|_{C^0}$ denotes the supremum norm on $C^0(S^1,\R)$, and 
\[
\mc C_{\bo C}:=\left\{\phi\in C^4(S^1,\R^+):\;\int \phi=1,\; \left\|\frac{d^i\phi}{dx^i}\right\|_{C^0}<C_i\,\;i=1,...,4\right\}.
\]
\begin{proof}[Proof of Proposition \ref{Prop:InvSetDens}]
Let $t \in [-\varepsilon_0,\varepsilon_0]$ and $\phi\in C^4(S^1,\R^+)$. We remind the reader that $F_{t,\varphi}$ is defined as $F_{t,\varphi}=f \circ \Phi_{t,\varphi}$. By \eqref{Eq:ApNormComp1}--\eqref{Eq:ApNormComp3} of the Appendix and some elementary bounds, we obtain
\begin{align}
	|F_{t,\varphi}'| &\geq  \omega(1-K_0|t|), \label{Eq:FBound1}\\
	|F_{t,\varphi}''| &\leq c((1+K_0|t|)^2+K_1)|t|, \label{Eq:FBound2}\\
	|F_{t,\varphi}'''| &\leq c((1+K_0|t|)^3+(1+K_0|t|)K_1|t|+K_2|t|), \label{Eq:FBound3}
	\end{align}
where $K_i=\max |h^{(i)}|$ and $c=\max\{\max|f'''|,3\max|f''|,\max|f'|\}$.

We now choose the constants $C_i$, $i=1,\dots,4$, starting with the choice of $C_1$. We denote the inverse branches of $F_{t,\varphi}$ by $F_{t,\varphi,i}^{-1}$, $i=1,\dots,N$. 
\begin{align*} 
\left\|(\mc L_t \varphi)' \right\|_{C^0} &=\left\|\left(\sum_{i=1}^N \frac{\varphi}{F_{t,\varphi}'} \circ F_{t,\varphi,i}^{-1}\right)' \right\|_{C^0} = \left\| \sum_{i=1}^N \left(\frac{\varphi'}{(F_{t,\varphi}')^2} - \frac{\varphi F_{t,\varphi}''}{(F_{t,\varphi}')^3}\right)\circ F_{t,\varphi,i}^{-1} \right\|_{C^0} \\
& \leq N \left\| \frac{\varphi'}{(F_{t,\varphi}')^2} - \frac{\varphi F_{t,\varphi}''}{(F_{t,\varphi}')^3} \right\|_{C^0} \\
&\leq N\left(\frac{1}{\omega^2(1-K_0t)^2}\|\varphi'\|_{C^0}+\frac{\max|f''|(1+K_0t)^2+cK_1t}{\omega^3(1-K_0t)^3}\|\varphi\|_{C^0}\right)
\end{align*}
by the bounds \eqref{Eq:FBound1}--\eqref{Eq:FBound2}. Since $\varphi$ is continuous and has unit integral, we have $\|\varphi\|_{C^0} \leq 1 +\| \varphi' \|_{C^0}$ giving us
\begin{align}
\left\|(\mc L_t \varphi)' \right\|_{C^0} &\leq N\left(\frac{1}{\omega^2(1-K_0t)^2}+\frac{\max|f''|(1+Kt)^2+cK_1t}{\omega^3(1-K_0t)^3} \right)\|\varphi'\|_{C^0} \nonumber \\
&\hspace{0.4cm} +\frac{N(\max|f''|(1+K_0t)^2+cK_1t)}{\omega^3(1-K_0t)^3} \nonumber \\
&=: \sigma_1 \|\varphi'\|_{C^0} + R_1. \label{Eq:LY1}
\end{align}
By the assumption \eqref{Eq:Assumf} on $f$, we can choose $t$ small enough (by potentially decreasing $\varepsilon_0$) such that $\sigma_1 < 1$. Choose $C_1$ such that
\[
\frac{R_1}{1-\sigma_1} \leq C_1,
\]  
and then $\|\varphi'\|_{C^0} \leq C_1$ implies $\left\|(\mc L_t \varphi)' \right\|_{C^0} \leq C_1$. 

We move on to the choice of $C_2$.

\begin{align} 
\left\|(\mc L_t \varphi)'' \right\|_{C^0} &=\left\|\left(\sum_{i=1}^N \frac{\varphi}{F_{t,\varphi}'} \circ F_{t,\varphi,i}^{-1}\right)'' \right\|_{C^0} \nonumber \\
& \leq N \left\| \frac{\varphi''}{(F_{t,\varphi}')^3} - 3\frac{\varphi' F_{t,\varphi}''}{(F_{t,\varphi}')^4}-\frac{\varphi F_{t,\varphi}'''}{(F_{t,\varphi}')^4}+3\frac{\varphi (F_{t,\varphi}'')^2}{(F_{t,\varphi}')^5} \right\|_{C^0} \nonumber \\
& \leq N\frac{1}{\omega^3(1-K_0t)^3}\|\varphi''\|_{C^0}+N\left\|3\frac{ F_{t,\varphi}''}{(F_{t,\varphi}')^4}+\frac{ F_{t,\varphi}'''}{(F_{t,\varphi}')^4}-3\frac{ (F_{t,\varphi}'')^2}{(F_{t,\varphi}')^5} \right\|_{C^0}\|\varphi'\|_{C^0} \nonumber \\
&\hspace{0.4cm}+N\left\| \frac{ F_{t,\varphi}'''}{(F_{t,\varphi}')^4}-3\frac{ (F_{t,\varphi}'')^2}{(F_{t,\varphi}')^5} \right\|_{C^0} \nonumber \\
&=: \sigma_2\|\varphi''\|_{C^0} +R_2^{(1)}\|\varphi'\|_{C^0}+R_2^{(2)}. \label{Eq:LY2}
\end{align}
For sufficiently small $t$ we have $\sigma_2 < 1$ by assumption \eqref{Eq:Assumf} and by \eqref{Eq:FBound1}--\eqref{Eq:FBound3} $R_2^{(1)}$, $R_2^{(2)}$ are bounded uniformly in $\varphi$ and in $t$ if $t$ is in some fixed neighborhood of zero, so choosing  
\[
\frac{R_2^{(1)}C_1+R_2^{(2)}}{1-\sigma_2} \leq C_2
\] 
will be sufficient.

Similar calculations can be done for the third and fourth derivative. We can conclude that
\begin{align*} 
\left\|(\mc L_t \varphi)''' \right\|_{C^0} &\leq \sigma_3\|\varphi'''\|_{C^0} +R_3^{(1)}\|\varphi''\|_{C^0}+R_3^{(2)}\|\varphi'\|_{C^0}+R_3^{(3)} \\
\left\|(\mc L_t \varphi)'''' \right\|_{C^0} &\leq \sigma_4\|\varphi''''\|_{C^0} +R_4^{(1)}\|\varphi'''\|_{C^0}+R_4^{(2)}\|\varphi''\|_{C^0}+R_4^{(3)}\|\varphi'\|_{C^0}+R_4^{(4)}
\end{align*}
where $\sigma_3,\sigma_4 < 1$ for sufficiently small $t$ and $R_i^{(j)}$ only depend on the derivatives of $f$ and $h$ and on $t$ (such that they are bounded uniformly in $t$ if $t$ is in some fixed neighborhood of zero). Choosing 
\begin{align*}
\frac{R_3^{(1)}C_2+R_3^{(2)}C_1+R_3^{(3)}}{1-\sigma_3} &\leq C_3 \\
\frac{R_4^{(1)}C_3+R_4^{(2)}C_2+R_4^{(3)}C_1+R_4^{(4)}}{1-\sigma_4} &\leq C_4
\end{align*}
will be sufficient for our purposes.
\end{proof}

\begin{lemma}\label{Prop:ContSelfconOperator}
If $f \in C^5(S^1,S^1)$ and $h \in C^5(S^1\times S^1,\R)$, then for $1\leq k \leq 3$, $Q_t: \mc C_{\bo C} \rightarrow \mc C_{\bo C}$ is continuous with respect to the $C^k$ distance.
\end{lemma}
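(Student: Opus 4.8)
The plan is to show that $Q_t$ maps $\mc C_{\bo C}$ continuously into itself in each $C^k$ norm, $1\le k\le 4$, by tracking how the defining formula \eqref{Eq:QepsDef},
\[
Q_t\psi(x)=\frac{\psi}{1+tA'_{\psi}}\circ \Phi^{-1}_{t,\psi}(x),
\]
depends on $\psi$. The key observation is that $Q_t\psi$ is built from $\psi$ by three operations: forming the linear functional $A_\psi(x)=\int h(x,y)\psi(y)\,dy$ (and its $x$-derivatives), inverting the diffeomorphism $\Phi_{t,\psi}=\Id+tA_\psi$, and composing/dividing. I would first establish that $\psi\mapsto A_\psi$ is Lipschitz from $(\mc C_{\bo C},\|\cdot\|_{C^k})$ to $C^{k+1}$: indeed $\frac{d^j}{dx^j}A_\psi(x)=\int \partial_1^j h(x,y)\psi(y)\,dy$, so $\|A_\psi-A_{\tilde\psi}\|_{C^{k+1}}\le K\|\psi-\tilde\psi\|_{C^0}\le K\|\psi-\tilde\psi\|_{C^k}$, using only that $h\in C^5$ and $k+1\le 5$. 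This already controls $\Phi_{t,\psi}$ and $\Phi_{t,\psi}'=1+tA'_\psi$ in $C^{k}$ and shows, for $t$ small (using $\min\Phi'_{t,\psi}\ge 1-K|t|>0$ as in Lemma \ref{Lem:IneqSifInv}), that the denominator $1+tA'_\psi$ is bounded below away from zero uniformly in $\psi\in\mc C_{\bo C}$.

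Next I would handle the inverse. Since $\Phi_{t,\psi}^{-1}$ satisfies $(\Phi_{t,\psi}^{-1})'=1/(\Phi'_{t,\psi}\circ\Phi_{t,\psi}^{-1})$, one differentiates repeatedly (the Faà di Bruno-type formulae collected in Appendix \ref{App:Der}) to bound $\|\Phi_{t,\psi}^{-1}\|_{C^{k}}$ uniformly on $\mc C_{\bo C}$ for $|t|\le\epsilon_1$, and, more importantly, to show continuous dependence: if $\psi_n\to\psi$ in $C^k$ then $\Phi_{t,\psi_n}\to\Phi_{t,\psi}$ in $C^k$, hence (because inversion of diffeomorphisms with uniformly bounded $C^{k+1}$ norm and uniformly bounded-below derivative is continuous $C^k\to C^k$ — estimate $|\Phi^{-1}_{t,\psi_n}-\Phi^{-1}_{t,\psi}|$ first in $C^0$ via the mean value theorem exactly as in the proof of Lemma \ref{Lem:IneqSifInv}, then bootstrap the higher derivatives) $\Phi_{t,\psi_n}^{-1}\to\Phi_{t,\psi}^{-1}$ in $C^k$. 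Finally, composition $(u,v)\mapsto u\circ v$ is continuous from $C^k\times C^k$ (with $v$ having values in $S^1$ and uniformly bounded $C^k$ norm) to $C^k$, and multiplication/division by a $C^k$ function bounded below is continuous on $C^k$; chaining these, $\psi\mapsto Q_t\psi = \big(\psi/(1+tA'_\psi)\big)\circ\Phi_{t,\psi}^{-1}$ is continuous $C^k\to C^k$. That $Q_t\psi$ again lies in $\mc C_{\bo C}$ — i.e. has unit integral, is positive, and has $\|\cdot\|_{C^i}$-derivatives below $C_i$ — is not literally what the statement asserts (the statement is continuity), but it is what Proposition \ref{Prop:InvSetDens} already effectively supplies for $\mc L_t=PQ_t$; here I would simply note positivity and $\int Q_t\psi=\int\psi=1$ are immediate, and the derivative bounds follow from the same Lasota–Yorke-type estimates restricted to the single-branch diffeomorphism $\Phi_{t,\psi}$ (which are even easier since there is no summation over $N$ branches and no expansion is lost).

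The main obstacle is the continuous dependence of the inverse branch construction on $\psi$ in the higher $C^k$ norms: one must show that the map sending a diffeomorphism to its inverse is continuous in $C^k$, uniformly over the relevant family, and then feed the $C^k$-convergence of $\Phi_{t,\psi_n}$ through this map together with convergence of the numerator $\psi_n/(1+tA'_{\psi_n})$. None of the individual steps is deep — each is an application of the chain rule, the product rule, and the elementary fact that $t\partial_1 h$ is small — but assembling the estimates for $k=2,3,4$ while keeping all constants uniform over $\mc C_{\bo C}$ and over $|t|\le\epsilon_1$ is the bookkeeping-heavy part; I would organise it by proving the $C^0$ bound first and then inducting on the order of the derivative, citing Appendix \ref{App:Der} for the explicit derivative formulae so as not to reproduce them inline.
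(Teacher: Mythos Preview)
Your proposal is correct and follows essentially the same approach as the paper: the paper proves the lemma by writing $Q_t\phi-Q_t\psi$ as a three-term telescoping sum (isolating in turn the change in the numerator $\phi\leadsto\psi$, the change in the denominator $A'_\phi\leadsto A'_\psi$, and the change in the inverse $\Phi^{-1}_{t,\phi}\leadsto\Phi^{-1}_{t,\psi}$) and bounding each piece in $C^k$ via the product/composition/inverse formulae of Appendix~\ref{App:Der}, which is exactly the content of your three ``building blocks'' (continuity of $\psi\mapsto A_\psi$, of inversion, and of composition/division). The only cosmetic difference is that the paper obtains an explicit Lipschitz estimate $\|Q_t\phi-Q_t\psi\|_{C^k}\le \tilde K\|\phi-\psi\|_{C^k}$ directly from the telescoping, whereas you phrase it as continuity of a composite of continuous maps; both are adequate for the application to Schauder's theorem.
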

\begin{proof}
 Fix $k\in\{1,2,3\}$. For any $\phi,\psi\in C^k(S^1,\R)$, using expression \eqref{Eq:QepsDef} for $Q_t$ one obtains
\begin{align*}
Q_t\phi-Q_t\psi&=\frac{\phi-\psi}{1+tA'_{\phi}}\circ \Phi^{-1}_{t,\phi}(x)+\\
&\quad+\left[\frac{\psi}{1+tA'_{\phi}}-\frac{\psi}{1+tA'_{\psi}}\right]\circ \Phi^{-1}_{t,\phi}(x)+ \\
&\quad+\frac{\psi}{1+tA'_{\psi}}\circ \Phi^{-1}_{t,\phi}(x)-\frac{\psi}{1+tA'_{\psi}}\circ \Phi^{-1}_{t,\psi}(x).
\end{align*}
For $t$ sufficiently, small $\frac{1}{1+tA'_\phi}$ and $\Phi^{-1}_{t,\phi}$ belong to $C^4$.
Then it follows that \begin{align*}
\left\|\frac{\phi-\psi}{1+tA'_{\phi}}\circ \Phi^{-1}_{t,\phi}(x)\right\|_{C^k}&\leq M \left\|\frac{1}{1+tA'_{\phi}}\circ \Phi^{-1}_{t,\phi}(x) \right\|_{C^k}\left\|(\phi-\psi)\circ \Phi^{-1}_{t,\phi}(x)\right\|_{C^k} \\
&\leq M' \left\|\frac{1}{1+tA'_{\phi}} \right\|_{C^k}\left(\max_{\ell=1,\dots,k}\|\Phi^{-1}_{t,\phi}\|_{C^k}^{\ell}\right)^2\left\|\phi-\psi\right\|_{C^k} \\
&\leq \tilde K\|\phi-\psi\|_{C^k},
\end{align*}
where the first two inequalities hold by \eqref{Eq:ApNormProd} and \eqref{Eq:ApNormComp} of the Appendix, respectively. The third inequality can be proved by using \eqref{Eq:ApNormInv} of the Appendix and some standard bounds on the derivatives of $\Phi_{t,\varphi}$. 
To bound the second term, notice that
\[
\frac{1}{1+tA'_\phi}-\frac{1}{1+tA'_\psi}=t\frac{A'_\psi-A'_\phi}{(1+tA'_\phi)(1+tA'_\psi)}
\]
and $\|A'_\psi-A'_\phi\|_{C^k}\le K\|\psi-\phi\|_{C^0}$. Then similarly to the calculation above,
\begin{align*}
&\left\|\left[\frac{\psi}{1+tA'_{\phi}}-\frac{\psi}{1+tA'_{\psi}}\right]\circ \Phi^{-1}_{t,\phi}(x)\right\|_{C^k}\\
&\leq M \left\|t\frac{\psi}{(1+tA'_\phi)(1+tA'_\psi)}\circ \Phi_{t,\varphi}^{-1}(x)\right\|_{C^k} \left\|(\phi-\psi)\circ \Phi^{-1}_{t,\phi}(x)\right\|_{C^k} \\
&\leq M' \left\|t\frac{\psi}{(1+tA'_\phi)(1+tA'_\psi)}\right\|_{C^k} \left(\max_{\ell=1,\dots,k}\|\Phi^{-1}_{t,\phi}\|_{C^k}^{\ell}\right)^2 \left\|\phi-\psi\right\|_{C^k} \\
&\leq \tilde K_{\mathbf{C}} \|\phi-\psi\|_{C^k},\end{align*}
where the last inequality also exploits that $\psi \in \mc C_{\mathbf{C}}$. Using \eqref{Eq:ApNormProd1}--\eqref{Eq:ApNormProd4} of the Appendix we can compute that there exists an explicit polynomial function $p$ such that
 \begin{align*}
&\left\|\frac{\psi}{1+tA'_{\psi}}\circ \Phi^{-1}_{t,\phi}(x)-\frac{\psi}{1+tA'_{\psi}}\circ \Phi^{-1}_{t,\psi}(x)\right\|_{C^k} \\
&\leq  p\left(\left\|\frac{\psi}{1+tA'_{\psi}}\right\|_{C^{k+1}},\|\Phi^{-1}_{t,\phi}\|_{C^k},\|\Phi^{-1}_{t,\psi}\|_{C^k}\right)\|\phi-\psi\|_{C^k}.
\end{align*}
Since $\frac{\psi}{1+tA'_{\psi}} \in C^4$, simple calculations as before give us
\[
\left\|\frac{\psi}{1+tA'_{\psi}}\circ \Phi^{-1}_{t,\phi}(x)-\frac{\psi}{1+tA'_{\psi}}\circ \Phi^{-1}_{t,\psi}(x)\right\|_{C^k} \leq \tilde{K}_{\mathbf{C}}\|\phi-\psi\|_{C^k}.
\]
By triangle inequality one can bound $\|Q_t\phi-Q_t\psi\|_{C^k}$ and the thesis follows.
\end{proof}
We can finally prove that $\rho(t) \in C^3(S^1,\mathbb{R})$.
\begin{proof}[Proof of Theorem \ref{Thm:LinResp} Point (i)]
$\mc C_{\bo C}\subset C^4(S^1,\R)$ is  convex and relatively compact in $C^3(S^1,\R)$. The invariance of the set $C_{\bo C}$ under the action of $\mc L_t$, as claimed in Proposition \ref{Prop:InvSetDens} and the continuity of $\mc L_t$ restricted to $C_{\bo C}$ as stated in Lemma \ref{Prop:ContSelfconOperator} implies by the Schauder fixed point theorem that $\mc L_t$ has a fixed density in $C^3(S^1,\R)$. By Theorem \ref{Thm:ExistLowReg} this density must be the unique Lipschitz density $\rho(t)$. 
\end{proof}

\section{Smooth Dependence of the Invariant Densities on the Coupling Strength} \label{Sec:SmoothDep}
Given $t \in [-\varepsilon_1,\varepsilon_1]$, the relation $t\mapsto\rho(t)$ defines a curve of fixed densities, that is, for each $t$ the density $\rho(t)$ is such that $\mc L_t\rho(t)=\rho(t)$. Recall the definition of $\hat{\mc C}_{\epsilon,K_1,K_2}$
\[
\hat {\mc C}_{\epsilon,K_1,K_2}:=\left\{ \gamma:[-\epsilon, \epsilon]\rightarrow \mc C_{\bo C}\mbox{ s.t. }  \;\substack{(t,x)\mapsto \gamma(t)(x) \mbox{ is } C^2\\ \sup_t\left\|\frac{d}{dt}\gamma(t)\right\|_{C^3}<K_1, \;\sup_t\left\|\frac{d^2}{dt^2}\gamma(t)\right\|_{C^2}<K_2}\right\}
\]
and the operator $\mc L$  \[(\mc L\gamma)(t)=\mc L_t(\gamma (t))\mbox{ for all }t\in [-\epsilon,\epsilon]\]  
 for  $\gamma\in \hat {\mc C}_{\epsilon,K_1,K_2}$.
The strategy consists of (i) showing that  there are $\epsilon_2$, $K_1$, and $K_2$ such that $\mc L$ keeps $\hat {\mc C}_{\epsilon_2,K_1,K_2}$ invariant, and  (ii) using Proposition \ref{Lem:ConeCont} to show that this implies that $t\mapsto \rho(t)$ is $C^1$.  
\begin{proof}[Proof of Proposition \ref{Prop:ChatInv}]
Let us start by noticing that the implicit function theorem implies that the function $g:[-\epsilon_1, \epsilon_1]\times S^1\rightarrow S^1$, defined as $g(t,y):= (\Phi_{t,\phi})^{-1}(y)$, is differentiable and $C^5$.  Furthermore, its $C^5$ norm can be uniformly bounded with respect to $\phi$ \footnote{This is a consequence of the fact that the function to which we apply the implicit function theorem, $F(t,y,x)=\Phi_{t,\phi}(x)-y$, has $C^5$ norm uniformly bounded varying $\phi$.}. Let $\epsilon \leq \varepsilon_1$ and pick $\gamma\in\hat{\mc C}_{\epsilon,K_1,K_2}$.
\begin{align}\label{Eq:FirstDev}
\frac{d}{dt}(\mc L\gamma)(t)&=\left(\frac{d}{ds}\mc L_s\right)_{s=t}\gamma(t)+\lim_{\delta\rightarrow 0}\frac{\mc L_t\gamma(t+\delta)-\mc L_t\gamma(t)}{\delta}.
\end{align}
For the first term in the right hand side of \eqref{Eq:FirstDev}
\begin{align}
\left(\frac{d}{ds}\mc L_s\right)_{s=t}\phi&=P\left(\lim_{\delta\rightarrow 0}\frac{ Q_{t+\delta}-Q_{t}}{\delta}\phi\right) \nonumber \\
&= P\frac{d}{ds}\left(\frac{\phi}{\partial_x\Phi_{s,\phi}}\circ \Phi^{-1}_{s,\phi}\right)_{s=t} \nonumber\\
&=P\bigg(\frac{\phi'}{\partial_x\Phi_{t,\phi}}\circ \Phi^{-1}_{t,\phi}\partial_t\Phi^{-1}_{t,\phi} \nonumber \hspace{0.4cm}-\frac{\phi}{(\partial_x\Phi_{t,\phi})^2} \circ \Phi^{-1}_{t,\phi}\partial_t[\partial_x\Phi_{t,\phi}\circ \Phi^{-1}_{t,\phi}]\bigg) \nonumber\\
&=P\bigg(\frac{\phi'}{\partial_x\Phi_{t,\phi}}\circ \Phi^{-1}_{t,\phi}\partial_t\Phi^{-1}_{t,\phi}\nonumber  \\
&\hspace{0.4cm}-\frac{\phi}{(\partial_x\Phi_{t,\phi})^2} \circ \Phi^{-1}_{t,\phi}[\partial_t\partial_x\Phi_{t,\phi}\circ \Phi^{-1}_{t,\phi}+\partial_x^2\Phi_{t,\phi}\circ \Phi^{-1}_{t,\phi}\partial_t\Phi^{-1}_{t,\phi}]\bigg). \label{Eq:ExpExpdds}
\end{align}
Therefore, putting $\phi=\gamma(t)$ and recalling that $\gamma(t)\in  \mc C_{\bo C}$, for $t$ sufficiently small all the functions above have bounded norm in $C^3(S^1,\R)$, and therefore there is a constant $\tilde K$ depending on $f,h$ and $\bo C$ such that
\begin{equation}\label{Eq:BndDer1}
\left\|\left(\frac{d}{ds}\mc L_s\right)_{s=t}\gamma(t)\right\|_{C^3}\leq \tilde K.
\end{equation}
The second term in the RHS of \eqref{Eq:FirstDev} equals
\begin{align*}
\Xi(t)&:=P\lim_{\delta\rightarrow 0}\frac{Q_t\gamma(t+\delta)-Q_t\gamma(t)}{\delta}\\&=  P\lim_{\delta\rightarrow 0}\frac{1}{\delta}\left[\frac{\gamma(t+\delta)}{1+tA_{\gamma(t+\delta)}'}\circ \Phi^{-1}_{t,\gamma(t+\delta)}-\frac{\gamma(t)}{1+tA_{\gamma(t)}'}\circ \Phi^{-1}_{t,\gamma(t)}\right]\\
 &=P\lim_{\delta\rightarrow 0}\frac{\gamma(t+\delta)-\gamma(t)}{\delta(1+A'_{\gamma(t+\delta)})}\circ \Phi^{-1}_{t,\gamma(t+\delta)}\\
 &\hspace{0.4cm}+P\lim_{\delta\rightarrow 0}\frac{1}{\delta}\left[\gamma(t)\left(\frac{1}{1+tA'_{\gamma(t+\delta)}}-\frac{1}{1+tA'_{\gamma(t)}}\right)\right]\circ\Phi^{-1}_{t,\gamma(t+\delta)}\\
 &\hspace{0.4cm}+P\lim_{\delta\rightarrow 0}\frac{1}{\delta}\left[\frac{\gamma(t)}{1+tA'_{\gamma(t)}}\circ \Phi^{-1}_{t,\gamma(t+\delta)}-\frac{\gamma(t)}{1+tA'_{\gamma(t)}}\circ \Phi^{-1}_{t,\gamma(t)}\right],
 \end{align*}
 giving
 \begin{align}
 &\Xi(t)= \nonumber\\
 &= PQ_{t,\gamma(t)}\frac{d}{dt}\gamma(t)\label{Eq:Term0}\\
 &+P\left[\left(\gamma(t)\frac{d}{ds}\left(\frac{1}{1+tA'_{\gamma(s)}}\right)_{s=t}\right)\circ \Phi^{-1}_{t,\gamma(t)}+\partial_x\left( Q_{t,\gamma(t)}\gamma(t)\right)\frac{d}{ds} \left(\Phi^{-1}_{t,\gamma(s)}\right)_{s=t}\right]\label{Eq:Term1}
\end{align}
For the first term of the sum, one can argue in a way similar to the proof of Proposition \ref{Prop:InvSetDens} and obtain the inequality
\begin{equation} \label{Eq:LYdtgamma}
\left\|PQ_{t,\gamma(t)}\frac{d}{dt}\gamma(t)\right\|_{C^3}\leq \sigma\left\|\frac{d}{dt}\gamma(t)\right\|_{C^3}+\tilde K
\end{equation}
with $\sigma<1$ and $\tilde K>0$ (in general different from the $\tilde K$ above). To see this, notice that in the referenced calculations we use uniform bounds for $F^{(k)}_{t,\varphi}$ in $\varphi$, only exploiting its unit integral. Furthermore, since $\int \frac{d}{dt}\gamma(t)=0$, the bound $\|\varphi\|_{C^0} \leq 1+\|\varphi'\|_{C^0}$ simplifies to $\|\frac{d}{dt}\gamma(t)\|_{C^0} \leq \|(\frac{d}{dt}\gamma(t))'\|_{C^0}$.

The regularity assumptions allow to deduce that the $C^3$ norm of the other terms is uniformly bounded. With computations similar to those in the proof of Proposition \ref{Prop:InvSetDens}, one can deduce  there are $\sigma<1$ (by possibly decreasing $\varepsilon_1$) and $\tilde K>0$ such that $\|\Xi\|_{C^3}\leq \sigma\|\frac{d}{dt}\gamma\|_{C^3}+\tilde K$. Putting this estimate together with the one in \eqref{Eq:BndDer1} we get
\begin{align*}
\left\|\frac{d}{dt}(\mc L\gamma)(t)\right\|_{C^3}&\leq \left\|\left(\frac{d}{ds}\mc L_s\right)_{s=t}\gamma(t)\right\|_{C^3}+\|\Xi(t)\|_{C^3}\\
&\leq \sigma\left\|\frac{d}{dt}\gamma(t)\right\|_{C^3}+\tilde K.
\end{align*}
We conclude that there is a $K_1$ sufficiently large such that if $\|\frac{d}{dt}\gamma(t)\|_{C^3}< K_1$ for all $t$ values sufficiently small, $\|\frac{d}{dt}(\mc L\gamma)(t)\|_{C^3}\leq K_1$.

Now, assuming that $\|\frac{d}{dt}\gamma\|_{C^3}< K_1$, we carry out similar computations in order to bound $\|\frac{d^2}{dt^2}(\mc L\gamma)(t)\|_{C^2}$.
\begin{align*}
\frac{d^2}{dt^2}(\mc L\gamma)(t)&=\frac{d}{dt}\left(\left(\frac{d}{ds}\mc L_s\right)_{s=t}\gamma(t)\right)+\frac{d}{dt}\Xi(t)
\end{align*}

The first term corresponds to the derivative with respect to $t$ of \eqref{Eq:ExpExpdds} when $\phi=\gamma(t)$, which in turn  is a combination of the functions 
\begin{align*}
&\partial_x\left(\frac{\gamma(t)}{\partial_x\Phi_{t,\gamma(t)}}\right)\circ \Phi^{-1}_{t,\gamma(t)},\; \partial_s\Phi^{-1}_{s,\gamma(t)}|_{s=t}, \\
&\frac{\gamma(t)}{(\partial_x\Phi_{t,\gamma(t)})^2}\circ \Phi^{-1}_{t,\gamma(t)},\; \partial_s\partial_x\Phi_{s,\gamma(t)}|_{s=t}\circ \Phi^{-1}_{t,\gamma(t)},\; \partial_x^2\Phi_{t,\gamma(t)}\circ \Phi^{-1}_{t,\gamma(t)}.
\end{align*} 
With the given regularity hypotheses all the derivatives with respect to $t$ of the above terms are continuous in the variable $x$. In particular, for the first term notice that the assumed regularity of all the terms implies
\[
\frac{d}{dt}\partial_x\left(\frac{\gamma(t)}{\partial_x\Phi_{t,\gamma(t)}}\right)=\partial_x\frac{d}{dt}\left(\frac{\gamma(t)}{\partial_x\Phi_{t,\gamma(t)}}\right).
\] 
We conclude that there exists a $\tilde K > 0$ such that $\left\|\frac{d}{dt}\left(\left(\frac{d}{ds}\mc L_s\right)_{s=t}\gamma(t)\right)\right\|_{C^2} < \tilde K$.

As for $\frac{d}{dt}\Xi(t)$, consider the decomposition of $\Xi(t)$ given by \eqref{Eq:Term0}--\eqref{Eq:Term1}. The derivative of $PQ_{t,\gamma(t)}\frac{d}{dt}\gamma(t)$ is
\begin{align*}
\frac{d}{ds}\left(PQ_{s,\gamma(s)}\frac{d}{ds}\gamma(s)\right)_{s=t}&= P\lim_{\delta\rightarrow 0}\frac{(Q_{t+\delta,\gamma(t+\delta)}-Q_{t,\gamma(t+\delta)})\frac{d}{dt}\gamma(t+\delta)}{\delta}\\ 
&\hspace{0.4cm}+P\lim_{\delta\rightarrow 0}\frac{(Q_{t,\gamma(t+\delta)}-Q_{t,\gamma(t)})\frac{d}{dt}\gamma(t+\delta)}{\delta}\\&\hspace{0.4cm}+P\lim_{\delta\rightarrow 0}\frac{Q_{t,\gamma(t)}(\frac{d}{dt}\gamma(t+\delta)-\frac{d}{dt}\gamma(t))}{\delta}.
\end{align*}

The first term in the sum on the RHS can be treated with computations similar to those leading to \eqref{Eq:ExpExpdds}, and equals
\begin{align*}
&P\lim_{\delta\rightarrow 0}\frac{(Q_{t+\delta,\gamma(t)}-Q_{t,\gamma(t)})\frac{d}{dt}\gamma(t)}{\delta}\\&=P\left(\partial_x\left(\frac{\frac{d\gamma(t)}{dt}}{\partial_x\Phi_{t,\gamma(t)}}\right)\circ \Phi^{-1}_{t,\gamma(t)}\partial_s\Phi^{-1}_{s,\gamma(t)}|_{s=t}\right.\\
&\left. -\frac{\frac{d\gamma(t)}{dt}}{(\partial_x\Phi_{t,\gamma(t)})^2} \circ \Phi^{-1}_{t,\gamma(t)}\left[\partial_s\partial_x\Phi_{s,\gamma(t)}|_{s=t}\circ \Phi^{-1}_{t,\gamma(t)}+\partial_x^2\Phi_{t,\gamma(t)}\circ \Phi^{-1}_{t,\gamma(t)}\partial_s\Phi^{-1}_{s,\gamma(t)}|_{s=t}\right]\right)
\end{align*}
which can be verified to have uniformly bounded $C^2$ norm. 

The  second term can be expressed as
\begin{align}
P\lim_{\delta\rightarrow 0}\frac{(Q_{t,\gamma(t+\delta)}-Q_{t,\gamma(t)})\frac{d}{dt}\gamma(t+\delta)}{\delta}&=\frac{d}{ds}\left(\frac{\frac{d\gamma}{dt}(t)}{\partial_x\Phi_{t,\gamma(s)}}\circ \Phi_{t,\gamma(s)}\label{Eq:secondtermsecderiv}^{-1}\right)_{s=t}
\end{align}
and noticing that 
\begin{align*}
\frac{d}{ds}\frac{1}{\partial_x\Phi_{t,\gamma(s)}}=-\frac{1}{(\partial_x\Phi_{t,\gamma(s)})^{2}}\frac{d}{ds}\partial_x\Phi_{t,\gamma(s)}, \quad
\frac{d}{ds}\left(\frac{d\gamma(t)}{dt}\right),\quad\mbox {and}\;
\partial_{s}\Phi_{t,\gamma(s)}^{-1}
\end{align*}
have uniformly bounded $C^2$ norm, it follows from straightforward computations that the whole of \eqref{Eq:secondtermsecderiv} has bounded norm. \footnote{Notice that in the expression above the factor $\partial_x\left(\frac{d\gamma}{d t}\right)$ appears, and it is for this kinds of factors that the choice of controlling  two different norms for $\frac{d\gamma}{dt}$ and $\frac{d^2\gamma}{dt^2}$ becomes crucial.}

For the third term we can find an inequality with a similar argument that leads to \eqref{Eq:LYdtgamma}:
\[
\left\|PQ_{t,\gamma(t)}\frac{d^2}{dt^2\gamma(t)}\right\|_{C^2}\leq\sigma\left\|\frac{d^2}{dt^2\gamma(t)}\right\|_{C^2}+\tilde K
\] 
For what concerns the derivative of \eqref{Eq:Term1}, it involves the same terms listed above. Most of the factors appearing have been previously treated apart from
\[
\frac{d}{ds}\left(\frac{1}{1+tA'_{\gamma(s)}}\right)_{s=t}\quad\quad\mbox{and} \quad\quad\partial_x\left( Q_{t,\gamma(t)}\gamma(t)\right).
\]
For the first one is easy to see that all the derivatives in $t$ of $1+tA'_{\gamma(s)}$ have  bounded $C^2$ norm in $x$. For the second one, it follows from previous computations that $Q_{t,\gamma(t)}\gamma(t)$ is at least $C^1((-\epsilon,\epsilon),S^1)$ and this implies
\[
\frac{d}{dt}\partial_xQ_{t,\gamma(t)}\gamma(t)=\partial_x\frac{d}{dt}Q_{t,\gamma(t)}\gamma(t).
\]
$\frac{d}{dt}Q_{t,\gamma(t)}\gamma(t)$ can be shown to have uniformly bounded $C^3$ norm in the variable $x$ with the same computations used to control \eqref{Eq:FirstDev}, and therefore $\|\partial_x\frac{d}{dt}Q_{t,\gamma(t)}\gamma(t)\|_{C^2}\le \tilde K$.   
Therefore, 
\[
\left\|\frac{d^2}{dt^2}(\mc L\gamma)(t)\right\|_{C^2}\leq \sigma \left\|\frac{d^2}{dt^2}\gamma(t)\right\|_{C^2}+\tilde K 
\]
so we can find $K_2$ sufficiently large so that $\|\frac{d}{dt}\gamma(t)\|_{C^3}<K_1$, with $K_1$ as above, and $\|\frac{d^2}{dt^2}\gamma(t)\|_{C^2}<K_2$ implies $\|\frac{d^2}{dt^2}(\mc L\gamma)(t)\|_{C^2}<K_2$. 
\end{proof}

\begin{proof}[Proof of Theorem \ref{Thm:LinResp} Point (ii)]
Given any $\gamma\in \hat{\mc C}_{\epsilon_2,K_1,K_2}$, it follows from Proposition \ref{Prop:ChatInv} that  $\{\mc L^n\gamma\}_{n\in\N_0}\in \hat{\mc C}_{\epsilon_2,K_1,K_2}$. Since $ \hat{\mc C}_{\epsilon_2,K_1,K_2}\subset C^2([-\epsilon_2,\epsilon_2], C^2(S^1,\R))$ is relatively compact in $ C^1([-\epsilon_2,\epsilon_2], C^1(S^1,\R))$
\footnote{Recall that $C^2(S^1,\R)$ is compactly embedded in $C^1(S^1,\R)$, and thus given a bounded ball $B'\subset C^2(S^1,\R)$ and any $\delta>0$ there are $\mc F_\delta:=\{\phi_i\}_{i=1}^{N_\delta}\subset C^1(S^1,\R)$ such that the union of the $\delta/2$ $C^1$-balls centered at the functions in $\mc F_\delta$ covers $B'$. Fix a bounded ball $B\subset C^2([-\epsilon_2,\epsilon_2], C^2(S^1,\R))$, there is  $B'\subset C^2(S^1,\R)$ such that $\gamma([-\epsilon_2,\epsilon_2])\subset B'$ for any $\gamma\in B$. Now fix $\delta>0$ and pick points $\{t_i\}_{i=1}^M$ partitioning $ [-\epsilon_2,\epsilon_2]$ such that $\|\gamma(s_1)-\gamma(s_2)\|_{C^0}<\delta/2$ whenever $s_1,s_2\in[t_i,t_{i+1}]$ for some $i$. Then, for any $(i_1,...,i_M)\in\{1,...,N\}^M$ fix a $C^1$ function $\gamma_{(i_1,...,i_M)}:[-\epsilon_2,\epsilon_2]\rightarrow C^1(S^1,\R)$ such that $\gamma_{(i_1,...,i_M)}(t_j)=\phi_{i_j}$. It is then easy to check that for every $\gamma\in B$ there is $(i_1,...,i_M)\in\{1,...,N\}^M$ such that $\gamma$ is at $C^1$ distance no more than $\delta$ from $\gamma_{(i_1,...,i_M)}$.},
 there is a strictly increasing sequence $n_k$ and $\bar\gamma\in C^1([-\epsilon_2,\epsilon_2],C^1(S^1,\R))$ such that $\mc L^{n_k}\gamma\rightarrow \bar\gamma$ in the $C^1$ topology.  In particular, for every $t$, $\|\mc L_t^{n_k}\gamma(t)-\bar\gamma(t)\|_{C^0}\rightarrow 0$ for $k\rightarrow\infty$. However, from Proposition \ref{Lem:ConeCont} we know that for every $t$, $\|\mc L_t^{n}\gamma(t)-\rho(t)\|_{C^0}\rightarrow 0$ for $n\rightarrow\infty$ from which we deduce that $\bar\gamma(t)=\rho(t)$, and therefore $t\mapsto \rho(t)$ belongs to $C^1([-\epsilon_2,\epsilon_2], C^1(S^1,\R))$.
\end{proof}

\section{Linear Response Formula} \label{Sec:LinRespFormula}
In this section we derive a linear response formula for the curve $t \mapsto \rho(t)$ of fixed densities. 
We proceed to prove the proposition in an analogous way to \cite[Theorem 2.2]{baladi2014linear} and apply perturbation arguments to deal with the self-consistency. 


\begin{lemma} \label{lem:transferderhat}
	Considering $\{P_t\}_{t \in [-\varepsilon_2,\varepsilon_2]}$ as bounded operators from $C^2(S^1,\mathbb{R})$ to $C^1(S^1,\mathbb{R})$, we claim that the map $t \mapsto P_t$ is differentiable at $\hat{t} \in (-\varepsilon_2,\varepsilon_2)$ and 
	\[
	\partial_{t}P_t|_{t=\hat{t}}(\varphi)=-P\left(\left(\frac{\varphi}{\Phi_{\hat{t},\rho(\hat{t})}'} (A_{\rho(\hat{t})}+\hat{t}A_{\partial_t{\rho(t)}|_{t=\hat{t}}})\right) \circ \Phi_{\hat{t},\rho(\hat{t})}^{-1} \right)'.
	\] 	
\end{lemma}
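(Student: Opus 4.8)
The plan is to reduce the statement to the $t$-differentiation of the one-parameter family of diffeomorphisms $\Psi_t:=\Phi_{t,\rho(t)}$ and of the associated change of variables, and then to compose with the fixed bounded linear operator $P$. Writing $\Psi_t(x)=x+tA_{\rho(t)}(x)$, so that $\Psi_t'=1+tA_{\rho(t)}'$, the defining formula \eqref{Eq:QepsDef} yields, for $\varphi\in C^2(S^1,\R)$,
\[
P_t\varphi \;=\; PQ_{t,\rho(t)}\varphi\;=\;P\Big(\tfrac{\varphi}{\Psi_t'}\circ\Psi_t^{-1}\Big).
\]
Since $f\in C^5$, $P$ is bounded and linear on $C^1(S^1,\R)$, hence commutes with $t$-differentiation; so it suffices to show that $t\mapsto(\varphi/\Psi_t')\circ\Psi_t^{-1}$ is differentiable from $(-\epsilon_2,\epsilon_2)$ into $C^1(S^1,\R)$, uniformly for $\varphi$ in the unit ball of $C^2(S^1,\R)$, to identify its derivative, and then to apply $P$. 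This will simultaneously exhibit $\partial_tP_t|_{t=\hat t}$ as an element of $\mathcal B\big(C^2(S^1,\R),C^1(S^1,\R)\big)$.

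First I would pin down the regularity of the ingredients. The linear map $\psi\mapsto A_\psi$ is bounded from $C^0(S^1,\R)$ into $C^5(S^1,\R)$ (all derivatives fall on $h\in C^5$), with $\|A_\psi\|_{C^5}\le K\|\psi\|$; composing with $t\mapsto\rho(t)$, which is $C^1$ into $C^1(S^1,\R)$ by Theorem \ref{Thm:LinResp}(ii), shows that $t\mapsto A_{\rho(t)}$ is $C^1$ into $C^5$ with derivative $A_{\partial_t\rho(t)}$. Hence $t\mapsto\Psi_t$ is $C^1$ into $C^5(S^1,S^1)$ with
\[
\partial_t\Psi_t \;=\; A_{\rho(t)}+t\,A_{\partial_t\rho(t)}\;=:\;u_t,\qquad \partial_t\Psi_t'=u_t',
\]
the second identity following from the joint smoothness of $(t,x)\mapsto\Psi_t(x)$. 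Applying the implicit function theorem to $(t,z,x)\mapsto\Psi_t(x)-z$ --- as in the footnote to the proof of Proposition \ref{Prop:ChatInv}, but now also tracking the $t$-dependence carried by $\rho(t)$ --- gives that $t\mapsto\Psi_t^{-1}$ is $C^1$ from $(-\epsilon_2,\epsilon_2)$ into $C^4(S^1,S^1)$, and differentiating $\Psi_t\circ\Psi_t^{-1}=\mathrm{id}$ yields $\partial_t(\Psi_t^{-1})=-(u_t\circ\Psi_t^{-1})/(\Psi_t'\circ\Psi_t^{-1})$.

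With these in hand the derivative is computed by the product/quotient and chain rules; the appendix estimates \eqref{Eq:ApNormProd1}--\eqref{Eq:ApNormInv} (boundedness and continuity of multiplication, composition and inversion between the relevant $C^k$ spaces) both justify the term-by-term differentiation and supply the uniformity in $\varphi$. One gets
\[
\partial_t\Big[\tfrac{\varphi}{\Psi_t'}\circ\Psi_t^{-1}\Big]
=\Big[\partial_t\big(\tfrac{\varphi}{\Psi_t'}\big)\Big]\circ\Psi_t^{-1}
+\Big[\big(\tfrac{\varphi}{\Psi_t'}\big)'\circ\Psi_t^{-1}\Big]\,\partial_t\big(\Psi_t^{-1}\big),
\]
and substituting $\partial_t(\varphi/\Psi_t')=-\varphi u_t'/(\Psi_t')^2$ together with the expression for $\partial_t(\Psi_t^{-1})$ above, a short algebraic simplification collapses the right-hand side to $-\big[(\varphi u_t/\Psi_t')\circ\Psi_t^{-1}\big]'$. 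Applying $P$, evaluating at $t=\hat t$ and recalling $u_{\hat t}=A_{\rho(\hat t)}+\hat t\,A_{\partial_t\rho(t)|_{t=\hat t}}$ gives exactly the claimed formula; its right-hand side is linear in $\varphi$, and by the same appendix estimates together with the finiteness of $\|\rho(\hat t)\|_{C^3}$ and $\|\partial_t\rho(t)|_{t=\hat t}\|_{C^1}$ it is bounded by $K\|\varphi\|_{C^2}$, so indeed $\partial_tP_t|_{t=\hat t}\in\mathcal B(C^2,C^1)$.

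The step I expect to be the main obstacle is establishing the $C^1$-dependence of $t\mapsto\Psi_t^{-1}$ and $t\mapsto\Psi_t'$ in the right norms: unlike in Proposition \ref{Prop:ChatInv}, where the density inside $\Phi_{t,\cdot}$ is frozen, here $t$ also enters through $\rho(t)$, so one must feed the regularity of $t\mapsto\rho(t)$ from Theorem \ref{Thm:LinResp}(ii) into the implicit function theorem while keeping all the estimates uniform over $\varphi$ in the unit ball of $C^2$. Once that is secured, identifying the derivative is the routine chain-rule computation indicated above.
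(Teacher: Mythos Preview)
Your proof is correct and follows essentially the same route as the paper's: both write $P_t=PQ_{t,\rho(t)}$, differentiate $(\varphi/\Psi_t')\circ\Psi_t^{-1}$ in $t$ using $\partial_t\Psi_t=A_{\rho(t)}+tA_{\partial_t\rho(t)}$, and collapse the outcome into the total $x$-derivative $-\big[(\varphi\,\partial_t\Psi_t/\Psi_t')\circ\Psi_t^{-1}\big]'$ before applying $P$. The only presentational difference is that the paper works directly on the difference quotient via the mean value theorem, whereas you first secure the $C^1$-in-$t$ regularity of $\Psi_t$ and $\Psi_t^{-1}$ (through the implicit function theorem and Theorem~\ref{Thm:LinResp}(ii)) and then invoke the chain rule; the algebra and the conclusion are identical.
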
 
\begin{proof}
	Recall that $P_t=PQ_{t,\rho(t)}$ which implies that
	\begin{equation} \label{eq:decomp}
		\partial_{t}P_t=P\partial_{t}Q_{t,\rho(t)}.
	\end{equation}
	Recall also that for $t\in[-\epsilon_0,\epsilon_0]$, $\Phi_{t,\rho(t)}$ is a  diffeomorphism of $S^1$ and $t \mapsto \Phi_{t,\rho(t)}$ is $C^2$. 
	Let $\varphi \in C^2$.
	\begin{align*}
		&Q_{t,\rho(t)}(\varphi)-Q_{\hat{t},\rho(\hat{t})}(\varphi)=\\
		&=\frac{\varphi}{\Phi_{t,\rho(t)}'} \circ \Phi_{t,\rho(t)}^{-1}-\frac{\varphi}{\Phi_{\hat{t},\rho(\hat{t})}'} \circ \Phi_{\hat{t},\rho(\hat{t})}^{-1}\\
		&=-(t-\hat{t}) (\partial_{t}\Phi_{t,\rho(t)}|_{t=t^*} \circ \Phi_{t^*,\rho(t^*)}^{-1})'\left(\frac{\varphi}{\Phi_{t^*,\rho(t^*)}'} \circ \Phi_{t^*,\rho(t^*)}^{-1}\right)\\
		&-(t-\hat{t}) \partial_{t}\Phi_{t,\rho(t)}|_{t=t^*} \circ \Phi_{t^*,\rho(t^*)}^{-1}\left(\frac{\varphi'}{(\Phi_{t^*,\rho(t^*)}')^2} \circ \Phi_{t^*,\rho(t^*)}^{-1} -\frac{\varphi\cdot \Phi_{t^*,\rho(t^*)}''}{(\Phi_{t^*,\rho(t^*)}')^3} \circ \Phi_{t^*,\rho(t^*)}^{-1}\right).
	\end{align*}
	for some $t^* \in (t, \hat{t})$. Notice that
	\begin{align*}
	&(\partial_t\Phi_{t,\rho(t)}|_{t=t^*}\circ\Phi_{t^*,\rho(t^*)}^{-1})'\left(\frac{\varphi}{\Phi_{t^*,\rho(t^*)}'} \circ \Phi_{t^*,\rho(t^*)}^{-1}\right) \\
	&=\left(\frac{\varphi}{(\Phi_{t^*,\rho(t^*)}')^2} \circ \Phi_{t^*,\rho(t^*)}^{-1}\right)\partial_t\Phi_{t,\rho(t)}'|_{t=t^*}\circ\Phi_{t^*,\rho(t^*)}^{-1}
	\end{align*}
	implying that
	\begin{align*}
		\partial_{t}Q_{t,\rho(t)}|_{t=\hat{t}}(\varphi)=&-\left(\frac{\varphi}{(\Phi_{\hat{t},\rho(\hat{t})}')^2} \partial_t\Phi_{t,\rho(t)}'|_{t=\hat{t}}\right)\circ\Phi_{\hat{t},\rho(\hat{t})}^{-1} \\
		&-\left(\left(\frac{\varphi'}{(\Phi_{\hat{t},\rho(\hat{t})}')^2} -\frac{\varphi\cdot \Phi_{\hat{t},\rho(\hat{t})}''}{(\Phi_{\hat{t},\rho(\hat{t})}')^3}\right) \cdot \partial_{t}\Phi_{t,\rho(t)}|_{t=\hat{t}}\right) \circ \Phi^{-1}_{\hat{t},\rho(\hat{t})} \\
		=&-\left(\left(\frac{\varphi}{\Phi_{\hat{t},\rho(\hat{t})}'} (\partial_t\Phi_{ t,\rho(t)})_{t= \hat{t}}\right) \circ\Phi_{\hat{t},\rho(\hat{t})}^{-1}\right)'
	\end{align*}
	and by \eqref{eq:decomp}
	\begin{align*}
		\partial_{t}P_t|_{t=\hat{t}}(\varphi)&=-P\left(\left(\frac{\varphi}{\Phi_{\hat{t},\rho(\hat{t})}'} (\partial_t\Phi_{ t,\rho(t)})_{t= \hat{t}}\right) \circ\Phi_{\hat{t},\rho(\hat{t})}^{-1}\right)'\\
		&=-P\left(\left(\frac{\varphi}{\Phi_{\hat{t},\rho(\hat{t})}'} (A_{\rho(\hat{t})}+\hat{t}A_{\partial_t{\rho(t)}|_{t=\hat{t}}})\right) \circ \Phi_{\hat{t},\rho(\hat{t})}^{-1} \right)'.
	\end{align*}
\end{proof}
\noindent Notice that this lemma implies that
\[
\partial_{t}P_{t}|_{t=\hat{t}}\rho(\hat{t})=-P[\Kappa_{\hat{t}}(\rho(\hat{t}))+\hat{t}\Kappa_{\hat{t}}(\partial_{t}\rho|_{t=\hat{t}})],
\]
where $\Kappa_{\hat{t}}$ is given by \eqref{Eq:ThetaOp}.

\begin{proof}[Proof of Proposition \ref{Prop:LinearResp}]
	Recall that $P_t$ is the transfer operator of $f\circ \Phi_{t,\rho(t)}$ which is an at least  $C^3$ transformation of $S^1$. Furthermore, there is $K>0$ such that $|\Phi_{t,\rho(t)}'|>1-Kt$ and thus   $f\circ \Phi_{t,\rho(t)}$ is uniformly expanding for any $t$ with $|t|<\epsilon$ for some sufficiently small $\epsilon$. Standard results on uniformly expanding maps (see \cite{boyarsky2012laws}) imply that $P_t$ has a spectral gap in $C^2(S^1,\R)$. In particular, 1 is an isolated and simple eigenvalue and $\R\rho(t)$ is the corresponding eigenspace. Because of the uniform Lasota-Yorke type inequalities \eqref{Eq:LY1}--\eqref{Eq:LY2}, a uniform bound can be given on the spectral gap of all $P_t$ such that $t$ is small enough. This implies that we can find a positively oriented curve $\gamma$ on the complex plane around 1 such that no other element of the spectrum of any $P_t$ is contained inside of it, and the projection formula
	\begin{equation} \label{eq:int}
		\rho(t)=\frac{1}{2\pi i}\int_{\gamma}(z-P_t)^{-1}\varphi(z)\text{d}z
	\end{equation}
	holds for any $\varphi \in C^2$ such that $\int_{S^1}\varphi=1$. One can prove this using the decomposition of $\mathcal{L}_{t,\rho(t)}$ as stated in \cite[Theorem 7.1.1]{boyarsky2012laws} (exploiting the fact that $F_{t,\rho(t)}$ is $C^2$ and uniformly expanding which properties by \cite{krzyzewski1969invariant} imply that $F_{t,\rho(t)}$ mixing) and a residue computation.
	
	For $z \in \gamma$ we have 
	\begin{equation} \label{eq:conj}
		(z-P_{\hat{t}})^{-1}-(z-P_{t})^{-1}=(z-P_{\hat{t}})^{-1}(P_{\hat{t}}-P_{t})(z-P_{t})^{-1}
	\end{equation}
	where we view $(z-P_{t})^{-1}$ as an operator acting on $C^2(S^1,\R)$, $(P_{\hat{t}}-P_{t})$ as an operator from $C^2(S^1,\R)$ to $C^1(S^1,\R)$ and $(z-P_{\hat{t}})^{-1}$ as acting on $C^1(S^1,\R)$.   
	
	Letting $t \to {\hat{t}}$ in \eqref{eq:conj} by Lemma \ref{lem:transferderhat} we have that
	\begin{equation} \label{eq:resolvder}
		\partial_{t}(z-P_{t})^{-1}|_{t=\hat{t}}=(z-P_{\hat{t}})^{-1}\partial_{t}P_{t}|_{t=\hat{t}}(z-P_{\hat{t}})^{-1}.
	\end{equation}
	Substituting $\varphi=\rho(\hat{t})$ in \eqref{eq:int} and differentiating both sides we get
	\begin{align}
		\partial_{t}\rho(t)|_{t=\hat{t}}&=\frac{1}{2\pi i}\int_{\gamma}(z-P_{\hat{t}})^{-1}\partial_{t}P_{t}|_{t=0}(z-P_{\hat{t}})^{-1}\rho(\hat{t})(z)\text{d}z \nonumber \\
		&=\frac{1}{2\pi i}\int_{\gamma}(z-P_{\hat{t}})^{-1}\partial_{t}P_{t}|_{t=\hat{t}}\rho(\hat{t})(z)\frac{1}{z-1}\text{d}z \nonumber \\
		&=-\frac{1}{2\pi i}\int_{\gamma}(z-P_{\hat{t}})^{-1}P(\Kappa_{\hat{t}}(\rho(\hat{t}))+\hat{t}\Kappa_{\hat{t}}(\partial_{t}\rho(t)|_{t=\hat{t}}))(z)\frac{1}{z-1}\text{d}z \nonumber\\
		&=-(1-P_{\hat{t}})^{-1}P(\Kappa_{\hat{t}}(\rho(\hat{t}))+\hat{t}\Kappa_{\hat{t}}(\partial_{t}\rho(t)|_{t=\hat{t}})), \label{eq:res4}
	\end{align} 
	where the last step is a residue computation.
	
	Thus we can write
	\[
	(1-P_{\hat{t}}+\hat{t}P\Kappa_{\hat{t}})\partial_{t}\rho(t)|_{t=\hat{t}}=-P\Kappa_{\hat{t}}(\rho(\hat{t})).
	\]
	It is a well-known fact that invertible operators form an open set in the space of bounded linear operators between normed spaces. More precisely, if an operator $T$ is invertible, then any operator $S$ such that $\|T-S\| \leq \|T^{-1}\|^{-1}$ is also invertible. Since $1-P_{\hat{t}}$ is invertible as an operator from $C^2(S^1,\mathbb{R})$ to $C^1(S^1,\mathbb{R})$, the invertibility of $1-P_{\hat{t}}+\hat{t}P\Kappa_{\hat{t}}$ follows if we are able to choose $\hat{t}$ so small that
	\[
	\|\hat{t}P\Kappa_{\hat{t}}\|_{C^2 \to C^1} \leq \|(1-P_{\hat{t}})^{-1}\|_{C^2 \to C^1} ^{-1} \quad \Leftrightarrow \quad \hat{t} \leq \frac{1}{\|P\Kappa_{\hat{t}}\|_{C^2 \to C^1} \|(1-P_{\hat{t}})^{-1}\|_{C^2 \to C^1} }.
	\]
	For this it suffices to show that $t \mapsto \|P\Kappa_{t}\|_{C^2 \to C^1} \|(1-P_{t})^{-1}\|_{C^2 \to C^1}$ is uniformly bounded in a small interval around zero. The norm $\|(1-P_{t})^{-1}\|_{C^2 \to C^1}$ is finite for all sufficiently small $t$ (because of the above mentioned spectral gap), so it can be uniformly bounded in an interval $[-\varepsilon_2,\varepsilon_2]$. As for $\Kappa_t$, we can compute that 
	\begin{align*}
		&\sup_{x \in S^1}|\Kappa_t{\varphi}(x)|=\\
		&=\sup_{x \in S^1}\left|\left(\left(\frac{\rho(t)}{\Phi_{t,\rho(t)}'} A_\varphi\right) \circ \Phi_{t,\rho(t)}^{-1}(x) \right)'\right| \\
		&= \sup_{x \in S^1} \left|\frac{\rho'(t)A_{\varphi}\Phi_{t,\rho(t)}'+\rho(t)A'_{\varphi}\Phi_{t,\rho(t)}'-\rho(t)A_{\varphi}\Phi_{t,\rho(t)}''}{(\Phi_{t,\rho(t)}')^2} \circ \Phi_{t,\rho(t)}^{-1}(x) \cdot (\Phi_{t,\rho(t)}^{-1})'(x)\right| \\
		& \leq \frac{1}{(1-tK)^3}\left(K(1+tK)\sup_{x \in S^1}|\rho'(t)|+(K(1+tK)+tK^2)\sup_{x \in S^1}|\rho(t)|\right)
	\end{align*}
	and a similar formula for $\sup_{x \in S^1}|(\Kappa_t{\varphi})
	'(x)|$ depending smoothly on $\rho,\rho',\rho''$ and $t$. Since $(t,x) \mapsto \rho(t)(x)$ is $C^2$, $\|\Kappa_{t}\|_{C^2 \to C^1}$ is uniformly bounded for $t \in [-\varepsilon_2,\varepsilon_2]$. 
	
	Thus we can choose $\varepsilon_3$ further decreasing $\varepsilon_2$ such that $1-P_{\hat{t}}+\hat{t}P\Kappa_{\hat{t}}$ is invertible for $t \in [-\varepsilon_3,\varepsilon_3]$ and
	\[
	\partial_{t}\rho(t)|_{t=\hat{t}}=-(1-P_{\hat{t}}+\hat{t}P\Kappa_{\hat{t}})^{-1}P\Kappa_{\hat{t}}(\rho(\hat{t})).
	\]
\end{proof}

\appendix
 \section{Appendix} \label{App:Der}
In this appendix we recall some simple formulae on derivatives that have been used extensively throughout the document. First of all higher derivatives of a product: Assume $\Phi,\Psi\in C^4$, then
 \begin{align}
 (\Phi\Psi)'&=\Phi'\Psi+\Phi\Psi'\label{Eq:ApNormProd1}\\
  (\Phi\Psi)''&=\Phi''\Psi+2\Phi'\Psi'+\Phi\Psi''\\
  (\Phi\Psi)'''&=\Phi'''\Psi+3\Phi''\Psi'+3\Psi'\Phi''+\Psi\Phi'''\\
  (\Phi\Psi)''''&=\Phi''''\Psi+4\Phi'''\Psi'+6\Phi''\Psi''+4\Phi'\Psi'''+\Psi\Phi'''' \label{Eq:ApNormProd4}
 \end{align}
 which implies that there is a constant $M > 0$ such that
 \begin{equation}\label{Eq:ApNormProd}
 \|\Phi\Psi\|_{C^k}\le M\|\Phi\|_{C^k}\|\Psi\|_{C^k}, \qquad k=1,\dots,4.
 \end{equation}
 Higher derivatives of a composition: Assume $\Phi,\Psi\in C^4$, then
 \begin{align}
 (\Phi\circ \Psi)'&=\Phi'\circ\Psi\Psi'\label{Eq:ApNormComp1}\\
 (\Phi\circ \Psi)''&=\Phi''\circ\Psi(\Psi')^2+\Phi'\circ\Psi\Psi''\\
  (\Phi\circ \Psi)'''&=\Phi'''\circ\Psi(\Psi')^3+2\Phi''\circ\Psi\Psi'\Psi''+\Phi''\circ\Psi\Psi''\Psi'+\Phi'\circ\Psi\Psi''' \label{Eq:ApNormComp3}\\
  (\Phi\circ \Psi)''''&=\Phi''''\circ\Psi(\Psi')^4+\Phi'''\circ\Psi \left[3(\Psi')^2\Psi''+\Psi''(\Psi')^2+2(\Psi')^2\Psi''\right]+\nonumber\\
  &\quad+\Phi''\circ\Psi\left[3(\Psi'')^2+4\Psi'\Psi'''\right]+\Phi'\circ\Psi(\Psi'''')
 \end{align}
 which implies that there is a numerical constant $M$ such that 
 \begin{equation}\label{Eq:ApNormComp}
\|\Phi\circ \Psi\|_{C^k}\leq M \|\Phi\|_{C^k}\cdot\max_{\ell=1,...,k}\|\Psi\|_{C^k}^\ell, \qquad k=1,\dots, 4.
 \end{equation}
 
We also give the formulae for the higher derivatives of the inverse function. Let $\Phi\in C^4$ be invertible and such that $\min|\Phi'|>0$, then
\begin{align}
(\Phi^{-1})'&=\frac{1}{\Phi'}\circ \Phi^{-1}\label{Eq:ApNormInv1}\\
(\Phi^{-1})''&=-\frac{\Phi''}{(\Phi')^2}\circ\Phi^{-1}(\Phi^{-1})'=-\frac{\Phi''}{(\Phi')^3}\circ\Phi^{-1}\\
(\Phi^{-1})'''&=\left[2\frac{(\Phi'')^2}{(\Phi')^3}-\frac{\Phi'''}{(\Phi')^2}\right]\circ\Phi^{-1}(\Phi^{-1})'-\frac{\Phi''}{(\Phi')^2}\circ\Phi^{-1}(\Phi^{-1})''\nonumber\\
&=\left[2\frac{(\Phi'')^2}{(\Phi')^4}-\frac{\Phi'''}{(\Phi')^3}-\frac{(\Phi'')^2}{(\Phi')^5}\right]\circ\Phi^{-1}\\
(\Phi^{-1})''''&=\left[4\frac{\Phi''\Phi'''}{(\Phi')^5}-8\frac{(\Phi'')^3}{(\Phi')^6}-\frac{\Phi''''}{(\Phi')^4}+3\frac{\Phi'''\Phi''}{(\Phi')^5}-2\frac{\Phi'''}{(\Phi')^6}+5\frac{(\Phi'')^3}{(\Phi')^7}\right]\circ\Phi^{-1}
\end{align}
 and so there exists an $M > 0$ such that
 \begin{equation}\label{Eq:ApNormInv}
  \|\Phi^{-1}\|_{C^k}\leq M\cdot\max_{n=1,...,7}(\min|\Phi'|)^{-n}\cdot\max_{\ell=1,...,3}\|\Phi\|_{C^k}^\ell, \qquad k=1,\dots,4.
 \end{equation}
 
 \newpage

\bibliographystyle{acm}
\bibliography{references_linresp}
\nocite{*}
 
\end{document}